\def\Z{\mathbb Z}
\def\R{\mathbb R}
\theoremstyle{plain}
\newtheorem{theorem}{Theorem}
\newtheorem*{KSWT}{The van Kampen--Shapiro--Wu Theorem}
\newtheorem*{WT}{The Wu--Tutte Theorem}
\newtheorem{lemma}{Lemma}
\theoremstyle{definition}
\newtheorem{definition}{Definition}
\theoremstyle{remark}
\newtheorem{remark}{Remark}
\title{Bad drawings of small complete graphs}
\author{Grant Cairns}
\author{Emily Groves}
\author{Yuri Nikolayevsky}
\address{Department of Mathematics, La Trobe University, Melbourne, Australia 3086}
\email{G.Cairns@latrobe.edu.au}
\email{18908496@students.latrobe.edu.au}
\email{Y.Nikolayevsky@latrobe.edu.au}
\begin{document}

\maketitle

\begin{abstract}
  We show that for  $K_5$ (resp.~ $K_{3,3}$) there is a drawing with $i$ independent crossings, and no pair of independent edges cross more than once,  provided $i$ is odd with $1\le i\le 15$ (resp.~ $1\le i\le 17$). Conversely, using the deleted product cohomology, we show that for $K_5$ and $K_{3,3}$, if $A$ is any set of pairs of independent edges, and $A$ has odd cardinality, then there is a drawing in the plane for which each element in $A$ cross an odd number of times, while  each pair of independent edges not in $A$ cross an even number of times. 
For $K_6$ we show that there is a drawing with $i$ independent crossings, and no pair of independent edges cross more than once,  if and only if  $3\le i\le 40$. 
\end{abstract}

\section{Introduction}

We consider planar drawings of  finite simple graphs in which vertices are represented as points, the edges are smooth arcs, joining distinct vertices, that do not self-intersect or pass through any vertex, and when distinct edges meet they only do so at common vertex endpoints, or at transverse crossings and in the latter case only have finitely many such crossings. 
Recall that two edges are said to be \emph{independent} if they are distinct and not adjacent, and a drawing is \emph{good}
if no pair of adjacent edges cross one another, and each pair of independent edges cross at most once  \cite{guy}. A crossing of two independent edges is called an \emph{independent crossing}.

\begin{definition} We say that a graph drawing is \emph{bad} if it is not good, but that it is \emph{tolerable} 
if no pair of independent edges cross more than once.
\end{definition}


Obviously, all good drawings are tolerable and as we will see, there are more tolerable drawings than good ones. For example, it is easy to see that in any good drawing of $K_4$, there is at most one crossing; it follows that  good drawings of $K_n$ have at most $\binom{n}{4}$ crossings \cite{Aetc} and this upper bound is attained for a straight-line drawing with the vertices at the vertices of a regular $n$-gon. 
But there are tolerable drawings of $K_4$ in which all 3 pairs of  independent edges cross; see Figure \ref{F:k4}.

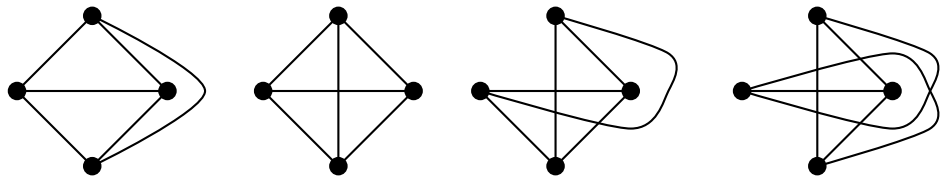
\begin{figure}[h!]
\begin{center}
\definecolor{blue}{rgb}{0,0,1}
\begin{tikzpicture}[scale=1];
\draw [thick,-]  (1.,0) -- (-1.,0)  ;
\draw [thick,-]  (1.,0) -- (0,1)  ;
\draw [thick,-]  (1.,0) -- (0,-1)  ;
\draw [thick,-]  (-1.,0) -- (0,-1)  ;
\draw [thick] (0.,1) -- (-1.,0)  ;
\draw [thick] plot [smooth] coordinates { (0,-1) (1.5,0)   (0,1)};
\fill  (1.,0) circle (3.5pt);
\fill  (0,-1) circle (3.5pt);
\fill  (0,1) circle (3.5pt);
\fill  (-1.,0) circle (3.5pt);
\end{tikzpicture}
\hskip.5cm
\begin{tikzpicture}[scale=1];
\draw [thick,-]  (1.,0) -- (-1.,0)  ;
\draw [thick,-]  (1.,0) -- (0,1)  ;
\draw [thick,-]  (1.,0) -- (0,-1)  ;
\draw [thick,-]  (0,1) -- (0,-1)  ;
\draw [thick,-]  (-1.,0) -- (0,-1)  ;
\draw [thick] (0.,1) -- (-1.,0)  ;
\fill  (1.,0) circle (3.5pt);
\fill  (0,-1) circle (3.5pt);
\fill  (0,1) circle (3.5pt);
\fill  (-1.,0) circle (3.5pt);
\end{tikzpicture}
\hskip.5cm
\begin{tikzpicture}[scale=1];
\draw [thick,-]  (1.,0) -- (-1.,0)  ;
\draw [thick,-]  (1.,0) -- (0,1)  ;
\draw [thick,-]  (1.,0) -- (0,-1)  ;
\draw [thick,-]  (0,1) -- (0,-1)  ;
\draw [thick,-]  (-1.,0) -- (0,-1)  ;
\draw [thick] plot [smooth] coordinates { (-1.,0) (1,-.5)  (1.5,0) (1.5,0.5)  (0,1)};
\fill  (1.,0) circle (3.5pt);
\fill  (0,-1) circle (3.5pt);
\fill  (0,1) circle (3.5pt);
\fill  (-1.,0) circle (3.5pt);
\end{tikzpicture}
\hskip.5cm
\begin{tikzpicture}[scale=1];
\draw [thick,-]  (1.,0) -- (-1.,0)  ;
\draw [thick,-]  (1.,0) -- (0,1)  ;
\draw [thick,-]  (1.,0) -- (0,-1)  ;
\draw [thick,-]  (0,1) -- (0,-1)  ;
\draw [thick] plot [smooth, tension=.5] coordinates { (-1.,0) (1,.5)  (1.5,0) (1.5,-0.5)  (0,-1)};
\draw [thick] plot [smooth, tension=.5] coordinates { (-1.,0) (1,-.5)  (1.5,0) (1.5,0.5)  (0,1)};
\fill  (1.,0) circle (3.5pt);
\fill  (0,-1) circle (3.5pt);
\fill  (0,1) circle (3.5pt);
\fill  (-1.,0) circle (3.5pt);
\end{tikzpicture}
\end{center}
\caption{Tolerable drawings of $K_4$ with zero to three independent crossings respectively}
\label{F:k4}
\end{figure}

In this paper we study bad drawings and tolerable drawings of small complete graphs, and small complete bipartite graphs. We present two kinds of results. The first kind concerns the existence of tolerable drawings having a certain number of independent crossings.

\begin{theorem}\label{K5K33K6Thm}\ 
\begin{enumerate}
\item For each odd integer $i$ with $1\le i\le 15$, there is a tolerable drawing of $K_5$ with $i$ independent crossings.
\item For each odd integer $i$ with $1\le i\le 17$, there is a tolerable drawing of $K_{3,3}$ with $i$ independent crossings.
\item For each integer $i$ with $3\le i\le 40$, there is a tolerable drawing of $K_6$ with $i$ independent crossings.
\end{enumerate}
 \end{theorem}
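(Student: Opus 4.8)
The plan is constructive. For each of $K_5$, $K_{3,3}$ and $K_6$ I would exhibit an explicit tolerable drawing realising the smallest value in the stated range, and then repeatedly apply a local modification that raises the number of independent crossings by a controlled amount while keeping the drawing tolerable, until the top of the range is reached. The smallest values are immediate: the crossing numbers of $K_5$, $K_{3,3}$ and $K_6$ are $1$, $1$ and $3$, so an optimal good drawing of each already provides, respectively, one, one and three independent crossings, with no adjacent edges crossing at all. It is also worth keeping in mind the convex (``cyclic'') straight-line drawings: these are good, and the one for $K_5$ has $\binom{5}{4}=5$ crossings while the one for $K_6$ has $\binom{6}{4}=15$, so they give convenient intermediate anchors.

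The engine is a \emph{finger move}. Given a drawing, an edge $e$, and a vertex $v$ not on $e$ such that a sub-arc of $e$ and the vertex $v$ lie on a common face $F$, one pushes that sub-arc off its route, sends it on a small excursion through $F$ that encircles $v$ once, and brings it back; this can always be done with the excursion meeting every edge transversally and avoiding all other vertices. The effect is to flip, for each edge $h$ incident to $v$, the parity of the number of crossings of $e$ with $h$, and to leave every other crossing parity unchanged. More generally, encircling a set $W$ of vertices (feasible when $e$ has a sub-arc on a face meeting all of $W$) flips the parity of the crossings of $e$ with exactly the edges having precisely one endpoint in $W$. I would choose $e$ and $W$ so that, among the affected edges, those \emph{independent} from $e$ are currently crossed by $e$ in a prescribed pattern; since flipping a parity either creates a single new crossing or removes the unique old one, this never produces an independent pair crossing twice, and the count changes by the number of affected independent edges that were previously uncrossed minus the number that were previously crossed. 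Affected edges adjacent to $e$ may acquire extra crossings with $e$, but adjacent crossings are permitted in a tolerable drawing and do not count.

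A short count of degrees shows which increments arise. In $K_5$, a vertex $v$ has degree $4$; taking $e$ so that exactly two of the four edges at $v$ are independent from $e$ (e.g.\ $e=wz$ when the other vertices are $w,x,y,z$ and the two independent edges are $vx,vy$), the move is a $+2$ move whenever $e$ currently crosses neither of those two; iterating from the $1$-crossing drawing then sweeps out every odd $i$ up to $15$. The same mechanism, with $e$ taken adjacent to one of the three edges at a vertex $v$ of $K_{3,3}$, yields a $+2$ move (two of those three edges are independent from $e$) and reaches every odd $i$ up to $17$. For $K_6$, a single-vertex finger move around $v$ with $e=v_iv_j$ ($i,j\ne v$) affects three edges independent from $e$ and so is a $\pm 3$ or $\pm 1$ move depending on how many were already crossed, while a move encircling a pair of vertices disjoint from $\{v_i,v_j\}$ affects four such edges and gives a $\pm 4$, $\pm 2$ or $0$ move; together with the $3$-crossing optimal drawing and an explicitly drawn $i=4$ case (which no parity-preserving move reaches from $i=3$), the $+2$ moves then climb through every $i$ with $3\le i\le 40$. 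The restriction to odd $i$ in (a) and (b) is of course forced by the Hanani--Tutte/deleted-product parity invariant recalled in the introduction, so those ranges are exactly the achievable ones; here we need only the constructions.

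The main obstacle is the bookkeeping, concentrated near the top of each range. At every step one must verify two things: that the encircling arc can be routed so that no independent pair is made to cross twice, which pins down exactly which independent edges $e$ already meets and hence which vertex or vertex-pair to encircle; and that such a configuration still exists once the count is large, when the drawing has become highly tangled and a sub-arc of a suitable $e$ co-facial with a suitable $v$ (or $W$) may be hard to find. I expect the last few values — around $i=15$ for $K_5$, $i=17$ for $K_{3,3}$, and the upper end of $[3,40]$ for $K_6$ — not to be reachable by a uniform move and instead to be given by explicit drawings, each checked tolerable by inspecting all $15$, $18$, respectively $45$ pairs of independent edges; the convex drawings above shorten several of these verifications by serving as alternative starting points.
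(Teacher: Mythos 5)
Your inductive engine rests on a step that is false as stated. A finger move only \emph{adds} crossings: pushing a sub-arc of $e$ through a common face and around $v$ creates one new crossing of $e$ with each edge incident to $v$, but it does not remove a crossing that $e$ already had with such an edge elsewhere along its length. So if an affected edge $h$ independent of $e$ was already crossed, after the move $e$ and $h$ cross twice and the drawing is no longer tolerable; the parity is even, but tolerability is a statement about counts, not parities. Cancelling the resulting bigon is not free either: the bigon may contain vertices and arcs of other edges, and sweeping $e$ across it changes its crossing numbers with everything inside, possibly creating new double crossings. Hence the ``creates a single new crossing or removes the unique old one'' claim, and with it the $\pm1$ and $\pm2$ variants you use for $K_6$, are unjustified. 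The only moves your argument actually supports are those in which \emph{every} affected independent edge was previously uncrossed by $e$; these give $+2$ for $K_5$ and $K_{3,3}$, but $+3$ (single vertex) or $+4$ (vertex pair) for $K_6$. With only $+3$ and $+4$ available from your bases $i=3$ and $i=4$, the value $i=5$ is unreachable, so the advertised ``$+2$ climb'' through $[3,40]$ has no engine and part (c) already needs more special drawings than the one you mention.

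More seriously, even the safe moves require, at every step, an edge $e$ and a vertex $v$ (or pair $W$) with $v$ co-facial with a sub-arc of $e$ and with $e$ crossing none of the affected independent edges, and you never verify that such a configuration persists as the drawing fills up. Near the top of each range the requirement becomes extremely rigid: to go from $13$ to $15$ in $K_5$, the two remaining uncrossed pairs must be exactly $\{e,va\}$ and $\{e,vb\}$ for a single edge $e$ and single vertex $v\notin e$, with $v$ moreover reachable from $e$ inside one face --- and nothing in the iteration arranges this; the analogous constraints appear at $17$ for $K_{3,3}$ and in the high thirties for $K_6$. You concede the point and defer these cases to ``explicit drawings'' that are never produced, but those extremal drawings (all $15$ independent pairs crossing in $K_5$, $17$ of $18$ in $K_{3,3}$, $40$ of $45$ in $K_6$) are precisely the nontrivial content of the theorem, so the proposal does not prove the statement. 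The paper's proof is simply the exhibition of such drawings (Figures \ref{figure:k5}--\ref{F:k6;37-40}), including the $5$-fold symmetric drawing of $K_5$ in which every independent pair crosses exactly once and the $K_6$ drawings with $25$--$40$ independent crossings built from it by attaching a sixth vertex ``at infinity''; any completion of your approach would in effect have to reproduce those configurations, so as it stands the plan fills in only the easy low and middle part of each range.
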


The existence of drawings described in the above theorem  is presented explicitly in Section~\ref{K5K33K6}. Conversely, we will see below that for each of these graphs, there are no tolerable drawings having a number of independent crossing other than those indicated in Theorem~\ref{K5K33K6Thm}. In order to explain this in greater detail, we require some terminology.

\begin{definition} For a graph $G$, let $PG$ denote the set of pairs of independent edges of $G$. We will say that a subset $A$ of $PG$ is \emph{2-realisable} if there is a drawing of $G$ in the plane for which each element in $A$ cross an odd number of times, while  each element of $PG\backslash A$ cross an even number of times. Further, we say that such a drawing \emph{2-realises} $A$. 
\end{definition}

To avoid any confusion, let us emphasise that in the above definition we impose no restrictions on the numbers of crossings of pairs of adjacent edges. For a given graph $G$ and given subset $A$ of $PG$, it is natural to ask whether $A$ is 2-realisable, and if so, is there a tolerable, or even good, drawing that 2-realises $A$. For example, for $K_4$, there are 6 edges and the set $PK_4$ of independent pairs has three elements. So there are $2^3=8$ possible subsets of $PK_4$. However, exploiting the $S_4$ symmetry, one easily sees that up to a relabelling of the vertices, there are just 4 essentially different subsets of $PK_4$, having 0, 1, 2, 3 elements respectively. As shown in Figure \ref{F:k4}, these subsets are all 2-realisable. In fact, the subsets having 0 or 1 element have a good drawing, while the subsets having 2 or 3 elements have tolerable drawings, but no good ones. 

If $G$ is  $K_5,K_{3,3}$ or $K_6$, and $A\subset PG$ is 2-realisable, we will see that the cardinality of $A$ satisfies the corresponding inequality in Theorem~\ref{K5K33K6Thm}. For $K_5$ and $K_{3,3}$, this result is immediate from Kleitman's Theorem (see Section~\ref{Ks}). We present the stronger  statement (see Theorem~\ref{T:sub}):

\begin{theorem}\label{K5K33con}\ 
If $G$ is  $K_5$ or $K_{3,3}$ and $A\subset PG$, then $A$ is 2-realisable if and only if its cardinality satisfies the corresponding condition of  Theorem~\ref{K5K33K6Thm}(a) or (b).
 \end{theorem}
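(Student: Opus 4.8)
The plan is to establish Theorem~\ref{K5K33con} by separating the two implications. The ``only if'' direction is the easier one: it follows from Kleitman's Theorem, which (as the excerpt announces will be discussed in Section~\ref{Ks}) controls the parity of the total number of independent crossings in \emph{any} drawing of $K_5$ and $K_{3,3}$. Concretely, Kleitman proved that in every drawing of $K_5$ (resp.~$K_{3,3}$) the number of pairs of independent edges that cross an odd number of times has the same parity as the crossing number $\operatorname{cr}(K_5)=1$ (resp.~$\operatorname{cr}(K_{3,3})=1$). Hence if $A\subset PG$ is $2$-realisable, then $|A|$ is odd; since $|PK_5|=15$ and $|PK_{3,3}|=9$\,---\,wait, one must be careful with these counts, but the point is that oddness together with the trivial bound $|A|\le|PG|$ forces exactly the arithmetic progression named in Theorem~\ref{K5K33K6Thm}(a),(b). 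So this half reduces to quoting Kleitman and a one-line counting remark.

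The substance is the ``if'' direction: given any odd-cardinality $A\subset PG$, produce a drawing $2$-realising it. The natural tool here is deleted-product cohomology, exactly as the abstract advertises. I would set up the standard van Kampen/Wu obstruction: a drawing $D$ of $G$ determines, for each unordered pair $\{e,f\}$ of independent edges, the parity of the number of crossings, and the collection of these parities is a $\Z_2$-cochain; changing the drawing by a finger move (pushing an edge across a vertex) changes this cochain by a coboundary, and conversely \emph{every} coboundary is realised by such moves. Thus the set of $2$-realisable subsets of $PG$ is precisely a coset structure: $A$ is $2$-realisable if and only if its characteristic cochain lies in $Z + B$, where $Z$ is the cochain of the chosen initial drawing and $B$ is the space of coboundaries (the image of the van Kampen coboundary map on the deleted product $\widetilde{G}$). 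For $G=K_5$ or $K_{3,3}$ the relevant cohomology group $H^1$ of the deleted product (with $\Z_2$ coefficients, in the appropriate dimension) is $\Z_2$, detected by the parity $\sum_{\{e,f\}}$ of the crossing cochain\,---\,this is the classical fact underlying the van Kampen--Shapiro--Wu nonplanarity criterion for $K_5$ and $K_{3,3}$. Consequently the image $B$ of the coboundary map has index $2$ inside the space of all cochains, and a coset of $B$ is exactly the set of all cochains with a fixed value of the total parity. Since every good drawing of $K_5$ or $K_{3,3}$ realises some odd-parity cochain (there is a drawing with a single crossing), the coset $Z+B$ is exactly the set of odd-cardinality subsets of $PG$. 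That is the theorem.

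The steps, in order: (1) recall the deleted product $\widetilde{G}$ of $G$, its free involution, and the equivariant cochain complex whose top cochain group is spanned by pairs of independent edges; (2) recall the van Kampen coboundary $\delta$ and the two facts that (a) two drawings give cohomologous cochains and (b) any cochain cohomologous to the cochain of some drawing is itself the cochain of a drawing\,---\,the second is the ``realisation'' half of the Hanani--Tutte circle of ideas and should be cited rather than reproved; (3) compute $H := \operatorname{coker}\delta \cong \Z_2$ for $K_5$ and $K_{3,3}$, with the isomorphism given by summing the cochain over all of $PG$ (this is where the specific small graph matters, and it is exactly the van Kampen--Shapiro--Wu computation); (4) conclude that $2$-realisable subsets form a single coset of $\ker(PG\text{-cochains}\to H)$, namely the odd-cardinality subsets, using the one-crossing drawing as a base point; (5) combine with the trivial bound $|A|\le|PG|$ and the explicit drawings of Theorem~\ref{K5K33K6Thm}(a),(b) (or just with the parity statement) to match the stated numerical ranges.

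The main obstacle is step~(2b), the realisation statement: it is intuitively ``known'' (finger moves generate all coboundaries, and conversely the obstruction cochain is a complete invariant of the drawing up to homotopy of edges), but making it precise enough to quote cleanly in the plane\,---\,as opposed to the classical embedding setting\,---\,requires care, because here we are not trying to \emph{remove} crossings but to \emph{prescribe their parities}. I expect the cleanest route is: start from a generic drawing, and show that for each independent pair $\{e,f\}$ there is a local move (slide one endpoint of $e$ around the vertex shared by two edges, picking up exactly the crossings dictated by a chosen coboundary generator) changing the parity of $\{e,f\}$'s crossings in a controlled way; then check that these moves span all of $B$. A secondary, purely bookkeeping, obstacle is getting the cardinalities $|PK_5|$ and $|PK_{3,3}|$ and the resulting odd ranges to line up precisely with Theorem~\ref{K5K33K6Thm}(a),(b); that is routine once the coset description is in hand. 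Everything else is either a citation (Kleitman, van Kampen--Shapiro--Wu) or a standard deleted-product computation.
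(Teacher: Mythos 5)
Your overall route is the same as the paper's: cite the van Kampen--Shapiro--Wu realisation theorem, observe that the 2-realisable subsets of $PG$ form exactly the coset of the coboundary space containing the cochain of a one-crossing drawing, and then identify that coset with the odd-cardinality subsets (the ``only if'' half being Kleitman, or a by-product of the same computation). The one place where your proposal is not yet a proof is the crux of step (3): the claim that the coboundaries are \emph{exactly} the even-cardinality cochains, i.e.\ that the symmetric $H^2(\overline{G^*},\Z_2)$ (not $H^1$) is one-dimensional and detected by total parity. Appealing to ``the classical fact underlying the van Kampen--Shapiro--Wu nonplanarity criterion'' does not deliver this: nonplanarity only requires the easy inclusion (every symmetric coboundary has even cardinality, since the differential of each generator $ev+ve$ contributes an even number of faces) together with $\mathfrak o(G)\ne 0$; it says nothing about the converse inclusion ``even $\Rightarrow$ exact,'' which is precisely what the ``if'' direction needs. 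Without that converse you only get that 2-realisable sets are odd, not that every odd set is 2-realisable.

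The paper closes this gap with Theorem~\ref{cs} (Abrams): for $K_5$ and $K_{3,3}$ the deleted product $\overline{G^*}$ is a closed surface, so $H^2(\overline{G^*},\Z_2)\cong\Z_2$ and the exact 2-cocycles form a codimension-one subspace; since on a closed surface each 1-cell lies on exactly two faces, every exact cocycle has even cardinality, and a dimension count then forces ``exact $=$ even.'' To repair your argument you must either invoke this surface structure or verify directly that the symmetric coboundary map has corank one (rank $14$ on the $15$ symmetric 2-cells for $K_5$, rank $17$ on the $18$ for $K_{3,3}$) and that its image consists of even-cardinality cochains. Two smaller points: $|PK_{3,3}|=18$, not $9$ (this is what makes the range $1\le i\le 17$ in Theorem~\ref{K5K33K6Thm}(b) come out right), and your plan to reprove the realisation half by finger moves is unnecessary work --- the paper simply quotes the van Kampen--Shapiro--Wu theorem for that step, and you may do the same.
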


A similar result does not hold for $K_6$; for example, as we explain in Section~\ref{higherKs}, there are 3-element subsets of $PK_6$ that are 2-realisable and there are 3-element subsets of $PK_6$ that are not 2-realisable. However, one does have:

\begin{theorem}\label{K6con}\ 
If  $A\subset PK_6$ is 2-realisable, then the cardinality of $A$ satisfies the condition in Theorem~\ref{K5K33K6Thm}(c).
 \end{theorem}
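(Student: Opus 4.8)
The plan is to reduce everything to two parity statements — which are exactly the restriction constraints from deleted‑product cohomology to the $K_5$ and $K_{3,3}$ subgraphs of $K_6$ — and then finish with a finite combinatorial analysis. Fix a drawing of $K_6$ that $2$‑realises a subset $A\subseteq PK_6$. Deleting a vertex $v$ leaves a copy of $K_5$, and the induced drawing $2$‑realises $A\cap P(K_6-v)$, so by Kleitman's Theorem $|A\cap P(K_6-v)|$ is odd; likewise $K_6$ has ten spanning copies of $K_{3,3}$, one for each splitting of the vertex set into two triples, and each such $B$ inherits a drawing $2$‑realising $A\cap P(B)$, so $|A\cap P(B)|$ is odd as well. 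Next I would record two incidence counts: a pair of independent edges spans four vertices, hence lies in $P(K_6-v)$ for exactly the two omitted $v$; and, by the $S_6$‑symmetry together with averaging ($\sum_B|P(B)|=10\cdot 18=180=45\cdot 4$), it lies in $P(B)$ for exactly four of the ten $K_{3,3}$'s. Thus $\sum_v|A\cap P(K_6-v)|=2|A|$ and $\sum_B|A\cap P(B)|=4|A|$, with the same identities for $\overline A:=PK_6\setminus A$. The lower bound is now immediate: the six numbers $|A\cap P(K_6-v)|$ are odd, hence each is $\ge 1$, so $2|A|\ge 6$ and $|A|\ge 3$ (equality by Theorem~\ref{K5K33K6Thm}(c)).

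For the upper bound I would show $|\overline A|\ge 5$, which is equivalent to $|A|\le 40$ since $|PK_6|=45$. As $|P(K_5)|=15$ and $|P(K_{3,3})|=18$, the complement satisfies $|\overline A\cap P(K_6-v)|$ even and $|\overline A\cap P(B)|$ odd for every $v$ and $B$. The cases $|\overline A|\le 2$ are ruled out because then $4|\overline A|=\sum_B|\overline A\cap P(B)|$ is a sum of ten odd cardinalities, hence at least $10>8$. For $|\overline A|=3$: each $|\overline A\cap P(B)|$ is odd and at most $3$, and since they sum to $12$ exactly one equals $3$, so all three elements of $\overline A$ lie in a single $K_{3,3}=(X,Y)$; a pair in $P(X,Y)$ omits precisely one vertex of $X$, so counting incidences ``(element of $\overline A$, vertex of $X$ it omits)'' gives the odd number $3$, whereas this same count is $\sum_{x\in X}|\overline A\cap P(K_6-x)|$, a sum of three even numbers — contradiction.

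The case $|\overline A|=4$ is the main obstacle. I would first read ``$|\overline A\cap P(K_6-v)|$ even'' as ``each vertex is omitted by an even number of the four elements of $\overline A$''; since every element uses four of the six vertices, the multiset of vertex‑multiplicities must be $\{4,4,4,4,0,0\}$, $\{4,4,4,2,2,0\}$, or $\{4,4,2,2,2,2\}$. The first is impossible, as four elements cannot lie on one $4$‑set (which carries only three independent pairs). The second fails against $B=(T,V\setminus T)$ with $T$ the triple used by all four elements: each element has an edge inside $T$, so $\overline A\cap P(B)=\varnothing$ is an even number of pairs. In the third pattern, two vertices occur in every element, and the four $2$‑subsets of the remaining four vertices that get used form either a $4$‑cycle or a perfect matching taken twice; in each sub‑case I expect a short finite check — computing $|\overline A\cap P(B)|\bmod 2$ against a handful of well‑chosen bipartitions and exploiting the dihedral symmetry of the configuration to keep the number of cases small — to show that no assignment of edge‑pairs to those four $2$‑subsets makes all ten residues equal to $1$. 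This yields $|\overline A|\ge 5$ and finishes the proof; the genuine work lies entirely in this last case, everything else being bookkeeping with the two parity facts.
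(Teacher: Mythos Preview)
Your lower bound is fine; in fact the paper argues $|A|\ge 3$ via the $K_{3,3}$ constraints rather than the $K_5$ ones, but your version is just as clean.

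For the upper bound you take a genuinely different route from the paper. You use only the $K_5$ and $K_{3,3}$ parity constraints and try to force $|\overline A|\ge 5$ by a case analysis on vertex multiplicities. This \emph{can} be made to work, and your treatment of $|\overline A|\le 3$ and of the first two multiplicity patterns for $|\overline A|=4$ is correct. But the final case $\{4,4,2,2,2,2\}$ is left as ``I expect a short finite check'', and while it does go through (for the 4-cycle sub-case, the four bipartitions $\{u,v,x\}$ force an alternating pattern on which $p_i$ equals $\{uv,\cdot\}$, and then two ``diagonal'' bipartitions such as $\{u,a,c\}$ and $\{u,a,d\}$ give contradictory parities; the double-matching sub-case yields a similar contradiction), it is several paragraphs of bookkeeping, not a one-liner.

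The paper avoids all of this by bringing in a third constraint you never mention: for any pair of vertex-disjoint triangles $C_3\sqcup C_3$ in $K_6$, the Jordan curve theorem forces $|A\cap P(C_3\sqcup C_3)|$ to be even, hence $|\overline A\cap P(C_3\sqcup C_3)|$ to be odd and thus at least~$1$. There are $10$ such triangle pairs, and each element of $\overline A$ lies in exactly $(n-4)(n-5)=2$ of them, so $|\overline A|\ge 10/2=5$ immediately. This is the key lemma you are missing; it replaces your entire $|\overline A|=3,4$ case analysis with a single double count, and moreover generalises to all $K_n$ as $|A|\le\lfloor\tfrac{8}{3}\binom{n}{4}\rfloor$. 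Your approach trades this geometric input for more combinatorics; it works for $K_6$ but does not obviously extend.
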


Our proofs of Theorems \ref{K5K33con} and \ref{K6con} use the deleted product cohomology machinery. We recall this briefly in Section \ref{Wu}. 

Notice for $G=K_5$ and $G=K_{3,3}$, the above results do not claim that tolerable drawings exist for every 2-realisable set  $A \subset PG$. Indeed, we strongly suspect that this is not the case; see Section~\ref{intolerable}.
There do exist graphs $G$ and 2-realisable subsets $A\subset PG$ such that $A$ is not 2-realised by any tolerable drawing. To see this, recall that 
for a given graph $G$, the \emph{crossing number} of $G$ is the minimum number of edge intersections in a plane drawing of the graph, where each intersection is counted separately, but if one counts the number of pairs of edges that intersect an odd number of times, you get the \emph{odd crossing number}. In \cite{PSS2008}, Pelsmajer,  Schaefer  and \v{S}tefankovi\v{c} gave a recipe for producing an example of a graph $G$ for which the odd crossing number is strictly less than the crossing number. For a drawing that realises the odd crossing number, let $A$ be the set of pairs of independent edges that meet an odd number of times. So by definition, the given drawing is a 2-realisation of $A$. But there can be no tolerable drawing that 2-realises $A$ since otherwise the crossing number would equal the odd crossing number. In Section~\ref{intolerable} we give further  examples of  graphs $G$ and subsets $A\subset PG$ which are 2-realisable but  not 2-realised by any tolerable drawing.

In order to give the reader further familiarity with the concepts, we begin in the following section  with a complete account of the graph $K_{2,3}$, where the situation is sufficiently simply that calculations can be readily done by hand. We show that every subset of $PK_{2,3}$ is 2-realised by a tolerable drawing. 

\section{The complete bipartite graph $K_{2,3}$}\label{Section:K23}

The graph $K_{2,3}$ has  6  edges. For each edge, there are two independent edges, so this gives 6 pairs of independent edges. Thus $PK_{2,3}$ has $2^6=64$ subsets. For $G=K_{2,3}$, the symmetry group $S_3\times \Z_2$ has order 12. It acts naturally on $PK_{2,3}$ and one can compute the number of orbits using the ``lemma that is not Burnside's'' \cite{Neu}. It is clear that by taking complements, the number of orbits of subsets of $PK_{2,3}$ of cardinality $i$ is the same as the number of orbits of subsets of cardinality $6-i$. Furthermore, the group $S_3\times \Z_2$  clearly acts transitively on the set of subsets having just one element. So it suffices to consider the action of $S_3\times \Z_2$   on the set of subsets of $PK_{2,3}$ of cardinality 2, and on the set of subsets of cardinality 3. The number of fixed points for the action is given in Table \ref{table:K23}; the vertices in the two parts of $K_{2,3}$ are labelled 1, 2, 3 and 4, 5 respectively. In the first column we have typical elements of conjugacy classes; in the second, the number of elements in the conjugacy class. From this we have that for subsets of cardinality 2 there are $36/12=3$ orbits; representatives for these orbits are as follows:
\[
\{(14)(25), (14)(35)\}, \{(14)(25), (24)(35)\}, \{(14)(25)\ (15)(24)\}.
\]
Similarly, for subsets of cardinality 3 there are also $36/12=3$ orbits, and representatives for these orbits are as follows:
\[
\{(14)(25), (14)(35), (24)(35)\}, 
\{(14)(25), (15)(24), (24)(35)\}, 
\{(14)(25), (14)(35), (15)(34)\}. 
\]
Hence, up to relabelling, there are 13 essentially distinct subsets $A$ of  $PK_{2,3}$; there are 1, 1, 3, 3, 3, 1, 1 such subsets having 0, 1, 2, 3, 4, 5, 6 elements respectively. Each of these subsets is 2-realised by a tolerable drawing, as shown in Figure~\ref{F:k23}; note that the cardinalities of $A$ in this Figure are not in increasing order. The first 5 of these are good drawings. The other subsets can't be 2-realised by good drawings. Indeed,  there are only 6 good drawings of $K_{2,3}$ up to isomorphism \cite{Harbo}, and two of these (with 3 crossings) correspond to the same subset of $PK_{2,3}$.

\begin{remark}
Notice that in all the drawings in Figure~\ref{F:k23}, one can draw a vertical edge between the two left-most vertices without crossing any other edge. The resulting graph is $K_{1,1,3}$ and the new edge is not part of any independent pair. Thus Figure~\ref{F:k23} shows that every subset of $PK_{1,1,3}$ can be 2-realised by a tolerable drawing.
\end{remark}

\begin{table}[h]
\begin{center}
\begin{tabular}{c|c|c|c}
  \hline
  typical element & $\#$ elements& $\#$ fixed subsets of card. 2& $\#$ fixed subsets of card. 3 \\
  \hline
 id & 1 & $\binom{6}{2}=15$ & $\binom{6}{3}=20$  \\
    $(12)$ & 3 & 3 & 2  \\
    $(123)$ & 2 & 0 & 2  \\
    $(45)$ & 1 & 3 & 0  \\
    $(12)(45)$ & 3 & 3 & 2  \\
    $(123)(45)$ & 2 & 0 & 0  \\
  \hline
     & 12 & 36 & 36  \\
  \hline
\end{tabular}
\end{center}
\caption{\ }
\label{table:K23}
\end{table}


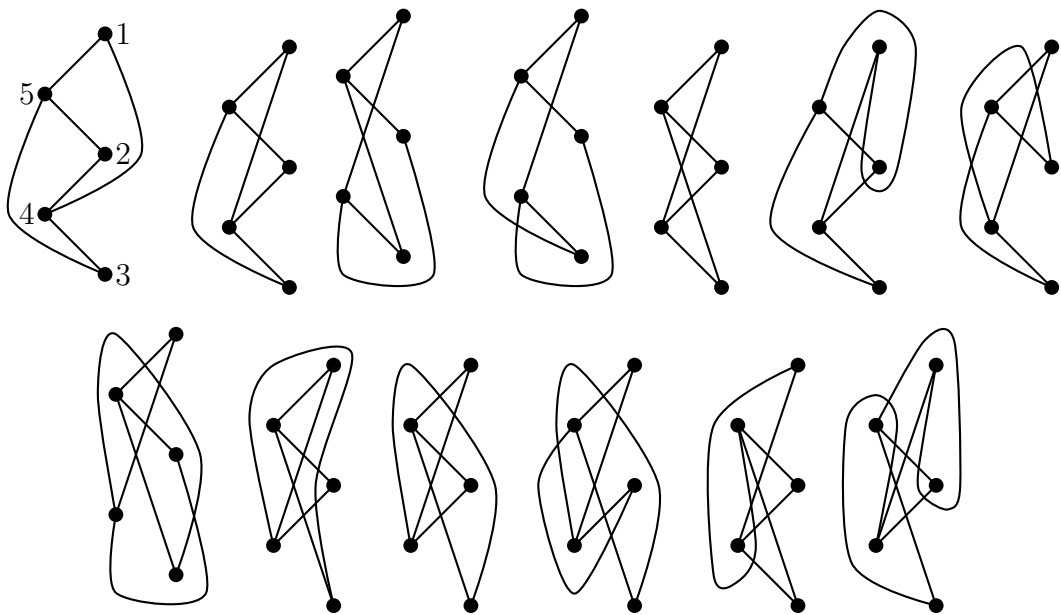
\begin{figure}[h!]
\begin{center}
\definecolor{blue}{rgb}{0,0,1}
\begin{tikzpicture}[scale=.8];
\newcommand\va{(1,2)};
\newcommand\vb{(1,0)};
\newcommand\vc{(1,-2)};
\newcommand\vd{(0,-1)};
\newcommand\ve{(0,1)};
\draw [thick,-]  \va -- \ve ;
\draw [thick,-]  \vb -- \ve ;
\draw [thick,-]  \vb -- \vd ;
\draw [thick,-]  \vc -- \vd ;
\draw [thick] plot [smooth, tension=.5] coordinates { \va  (1.6,0) \vd };
\draw [thick] plot [smooth, tension=.5] coordinates { \vc  (-.6,-1) \ve };
\fill  \va circle (3.5pt);
\fill  \vb circle (3.5pt);
\fill  \vc circle (3.5pt);
\fill  \vd circle (3.5pt);
\fill  \ve circle (3.5pt);
\draw (1.3,2) node {$1$};
\draw (1.3,0) node {$2$};
\draw (1.3,-2) node {$3$};
\draw (-.3,1) node {$5$};
\draw (-.3,-1) node {$4$};
\end{tikzpicture}
\hskip.3cm
\begin{tikzpicture}[scale=.8];
\newcommand\va{(1,2)};
\newcommand\vb{(1,0)};
\newcommand\vc{(1,-2)};
\newcommand\vd{(0,-1)};
\newcommand\ve{(0,1)};
\draw [thick,-]  \va -- \ve ;
\draw [thick,-]  \vb -- \ve ;
\draw [thick,-]  \vb -- \vd ;
\draw [thick,-]  \vc -- \vd ;
\draw [thick,-]  \va -- \vd ;
\draw [thick] plot [smooth, tension=.5] coordinates { \vc  (-.6,-1) \ve };
\fill  \va circle (3.5pt);
\fill  \vb circle (3.5pt);
\fill  \vc circle (3.5pt);
\fill  \vd circle (3.5pt);
\fill  \ve circle (3.5pt);
\end{tikzpicture}
\hskip.3cm
\begin{tikzpicture}[scale=.8];
\newcommand\va{(1,2)};
\newcommand\vb{(1,0)};
\newcommand\vc{(1,-2)};
\newcommand\vd{(0,-1)};
\newcommand\ve{(0,1)};
\draw [thick,-]  \va -- \ve ;
\draw [thick,-]  \vb -- \ve ;
\draw [thick,-]  \vc -- \ve ;
\draw [thick,-]  \vc -- \vd ;
\draw [thick,-]  \va -- \vd ;
\draw [thick] plot [smooth, tension=.5] coordinates { \vb  (1.5,-2.3) (0,-2.3) \vd };
\fill  \va circle (3.5pt);
\fill  \vb circle (3.5pt);
\fill  \vc circle (3.5pt);
\fill  \vd circle (3.5pt);
\fill  \ve circle (3.5pt);
\end{tikzpicture}
\hskip.3cm
\begin{tikzpicture}[scale=.8];
\newcommand\va{(1,2)};
\newcommand\vb{(1,0)};
\newcommand\vc{(1,-2)};
\newcommand\vd{(0,-1)};
\newcommand\ve{(0,1)};
\draw [thick,-]  \va -- \ve ;
\draw [thick,-]  \vb -- \ve ;
\draw [thick,-]  \vc -- \vd ;
\draw [thick,-]  \va -- \vd ;
\draw [thick] plot [smooth, tension=.5] coordinates { \vc  (-.6,-1) \ve };
\draw [thick] plot [smooth, tension=.5] coordinates { \vb  (1.5,-2.3) (0,-2.3) \vd };
\fill  \va circle (3.5pt);
\fill  \vb circle (3.5pt);
\fill  \vc circle (3.5pt);
\fill  \vd circle (3.5pt);
\fill  \ve circle (3.5pt);
\end{tikzpicture}
\hskip.3cm
\begin{tikzpicture}[scale=.8];
\newcommand\va{(1,2)};
\newcommand\vb{(1,0)};
\newcommand\vc{(1,-2)};
\newcommand\vd{(0,-1)};
\newcommand\ve{(0,1)};
\draw [thick,-]  \va -- \ve ;
\draw [thick,-]  \vb -- \ve ;
\draw [thick,-]  \vc -- \vd ;
\draw [thick,-]  \va -- \vd ;
\draw [thick,-]  \vc -- \ve ;
\draw [thick,-]  \vb -- \vd ;
\fill  \va circle (3.5pt);
\fill  \vb circle (3.5pt);
\fill  \vc circle (3.5pt);
\fill  \vd circle (3.5pt);
\fill  \ve circle (3.5pt);
\end{tikzpicture}
\hskip.3cm
\begin{tikzpicture}[scale=.8];
\newcommand\va{(1,2)};
\newcommand\vb{(1,0)};
\newcommand\vc{(1,-2)};
\newcommand\vd{(0,-1)};
\newcommand\ve{(0,1)};
\draw [thick,-]  \vb -- \ve ;
\draw [thick,-]  \vb -- \vd ;
\draw [thick,-]  \vc -- \vd ;
\draw [thick,-]  \va -- \vd ;
\draw [thick] plot [smooth, tension=.5] coordinates { \va  (.7,0) (1,-.4) (1.3,0)(1.6,2) (1,2.6) (.4,2)\ve };
\draw [thick] plot [smooth, tension=.5] coordinates { \vc  (-.8,-1) \ve };
\fill  \va circle (3.5pt);
\fill  \vb circle (3.5pt);
\fill  \vc circle (3.5pt);
\fill  \vd circle (3.5pt);
\fill  \ve circle (3.5pt);
\end{tikzpicture}
\hskip.3cm
\begin{tikzpicture}[scale=.8];
\newcommand\va{(1,2)};
\newcommand\vb{(1,0)};
\newcommand\vc{(1,-2)};
\newcommand\vd{(0,-1)};
\newcommand\ve{(0,1)};
\draw [thick,-]  \va -- \ve ;
\draw [thick,-]  \vb -- \ve ;
\draw [thick,-]  \vc -- \vd ;
\draw [thick,-]  \va -- \vd ;
\draw [thick] plot [smooth, tension=.5] coordinates { \vb  (.5,2) (-.5,1)   \vd };
\draw [thick] plot [smooth, tension=.5] coordinates { \vc   (-.5,-1)\ve };
\fill  \va circle (3.5pt);
\fill  \vb circle (3.5pt);
\fill  \vc circle (3.5pt);
\fill  \vd circle (3.5pt);
\fill  \ve circle (3.5pt);
\end{tikzpicture}

\vskip.3cm
\begin{tikzpicture}[scale=.8];
\newcommand\va{(1,2)};
\newcommand\vb{(1,0)};
\newcommand\vc{(1,-2)};
\newcommand\vd{(0,-1)};
\newcommand\ve{(0,1)};
\draw [thick,-]  \va -- \ve ;
\draw [thick,-]  \vb -- \ve ;
\draw [thick,-]  \va -- \vd ;
\draw [thick,-]  \vc -- \ve ;
\draw [thick] plot [smooth, tension=.5] coordinates { \vc  (1.4,0) (0,2)  (-.3,1)   \vd };
\draw [thick] plot [smooth, tension=.5] coordinates { \vb  (1.5,-2.3) (0,-2.3) \vd };
\fill  \va circle (3.5pt);
\fill  \vb circle (3.5pt);
\fill  \vc circle (3.5pt);
\fill  \vd circle (3.5pt);
\fill  \ve circle (3.5pt);
\end{tikzpicture}
\hskip.3cm
\begin{tikzpicture}[scale=.8];
\newcommand\va{(1,2)};
\newcommand\vb{(1,0)};
\newcommand\vc{(1,-2)};
\newcommand\vd{(0,-1)};
\newcommand\ve{(0,1)};
\draw [thick,-]  \va -- \ve ;
\draw [thick,-]  \va -- \vd ;
\draw [thick,-]  \vb -- \vd ;
\draw [thick,-]  \vb -- \ve ;
\draw [thick,-]  \vc -- \ve ;
\draw [thick] plot [smooth, tension=.5] coordinates { \vc  (.7,0) (1.3,2.2)(0,2)(-.4,1.1)   \vd };
\fill  \va circle (3.5pt);
\fill  \vb circle (3.5pt);
\fill  \vc circle (3.5pt);
\fill  \vd circle (3.5pt);
\fill  \ve circle (3.5pt);
\end{tikzpicture}
\hskip.3cm
\begin{tikzpicture}[scale=.8];
\newcommand\va{(1,2)};
\newcommand\vb{(1,0)};
\newcommand\vc{(1,-2)};
\newcommand\vd{(0,-1)};
\newcommand\ve{(0,1)};
\draw [thick,-]  \va -- \ve ;
\draw [thick,-]  \vb -- \ve ;
\draw [thick,-]  \va -- \vd ;
\draw [thick,-]  \vc -- \ve ;
\draw [thick,-]  \vb -- \vd ;
\draw [thick] plot [smooth, tension=.5] coordinates { \vc  (1.4,0) (0,2)  (-.3,1)   \vd };
\fill  \va circle (3.5pt);
\fill  \vb circle (3.5pt);
\fill  \vc circle (3.5pt);
\fill  \vd circle (3.5pt);
\fill  \ve circle (3.5pt);
\end{tikzpicture}
\hskip.3cm
\begin{tikzpicture}[scale=.8];
\newcommand\va{(1,2)};
\newcommand\vb{(1,0)};
\newcommand\vc{(1,-2)};
\newcommand\vd{(0,-1)};
\newcommand\ve{(0,1)};
\draw [thick,-]  \va -- \ve ;
\draw [thick,-]  \va -- \vd ;
\draw [thick,-]  \vc -- \ve ;
\draw [thick,-]  \vb -- \vd ;
\draw [thick] plot [smooth, tension=.5] coordinates { \vc  (1.4,0) (0,2)  (-.3,1)   \vd };
\draw [thick] plot [smooth, tension=.5] coordinates { \vb  (0,-1.8) (-.6,0)    \ve };
\fill  \va circle (3.5pt);
\fill  \vb circle (3.5pt);
\fill  \vc circle (3.5pt);
\fill  \vd circle (3.5pt);
\fill  \ve circle (3.5pt);
\end{tikzpicture}
\hskip.3cm
\begin{tikzpicture}[scale=.8];
\newcommand\va{(1,2)};
\newcommand\vb{(1,0)};
\newcommand\vc{(1,-2)};
\newcommand\vd{(0,-1)};
\newcommand\ve{(0,1)};
\draw [thick,-]  \va -- \vd ;
\draw [thick,-]  \vc -- \ve ;
\draw [thick,-]  \vb -- \ve ;
\draw [thick,-]  \vb -- \vd ;
\draw [thick,-]  \vc -- \vd ;
\draw [thick] plot [smooth, tension=.5] coordinates { \va  (-.4,1) (-.4,-1.5) (0,-1.6)  (.3,-1) \ve };
\fill  \va circle (3.5pt);
\fill  \vb circle (3.5pt);
\fill  \vc circle (3.5pt);
\fill  \vd circle (3.5pt);
\fill  \ve circle (3.5pt);
\end{tikzpicture}
\hskip.3cm
\begin{tikzpicture}[scale=.8];
\newcommand\va{(1,2)};
\newcommand\vb{(1,0)};
\newcommand\vc{(1,-2)};
\newcommand\vd{(0,-1)};
\newcommand\ve{(0,1)};
\draw [thick,-]  \va -- \vd ;
\draw [thick,-]  \vc -- \ve ;
\draw [thick,-]  \vb -- \ve ;
\draw [thick,-]  \vb -- \vd ;
\draw [thick] plot [smooth, tension=.5] coordinates { \va  (.7,.2) (.8,-.2) (1.2,-.4) (1.4,0)  (1.3,2.4)(.8,2.4)\ve };
\draw [thick] plot [smooth, tension=.5] coordinates { \vc  (-.4,-1.3) (.-.5,1) (0,1.5) (.35,1)  \vd };
\fill  \va circle (3.5pt);
\fill  \vb circle (3.5pt);
\fill  \vc circle (3.5pt);
\fill  \vd circle (3.5pt);
\fill  \ve circle (3.5pt);
\end{tikzpicture}
\end{center}
\caption{Tolerable drawings of $K_{2,3}$ }
\label{F:k23}
\end{figure}


\section{Intolerably bad examples}\label{intolerable}

In this section, we exhibit a graph $G$ and a subset of $PG$ that is 2-realisable but cannot be 2-realised by a tolerable drawing of $G$. Our graph $G$ will be a disjoint union of $N$ edges. Since $G$ has no adjacent edges, we have full control on its crossing pattern, but having constructed such an example we will be able to construct a connected example, as explained in Remark~\ref{r:connect} below.

\begin{lemma} \label{l:all2}
  If $G$ is a disjoint union of edges, then any subset $A$ of the set $PG$ of pairs of its independent edges is $2$-realisable.
\end{lemma}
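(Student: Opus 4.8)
The plan is to start from the trivial drawing of $G$, in which the edges are drawn as pairwise disjoint straight segments, and then to repair its crossing pattern one independent pair at a time, using a local move that toggles the crossing parity of a single chosen pair while leaving every other crossing parity unchanged.

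The tool I would use is the \emph{finger move}, the elementary operation underlying the Hanani--Tutte theory: given any drawing, an edge $e$, and a vertex $v$ not incident to $e$, one deforms the drawing so that, for every edge $f$ incident to $v$, the parity of the number of crossings of $e$ with $f$ is switched, and no other crossing parity changes. Concretely, one picks a point $p$ of $e$, chooses an arc $\gamma$ from $p$ to a small disc $D$ around $v$ with $\gamma$ avoiding all vertices of $G$ and meeting $e$ only at $p$, and pushes a thin tongue of $e$ out along $\gamma$, once around $v$ inside $D$, and back along $\gamma$. Running out along $\gamma$ and back contributes an even number of crossings with every edge, hence nothing modulo $2$; the single loop around $v$ contributes exactly one crossing with each edge incident to $v$. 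Thus the only crossing parities affected are precisely those of $e$ with the edges at $v$; in particular no pair of edges both distinct from $e$ is affected, nor is any pair $\{e,g\}$ with $g$ not incident to $v$. I would include a one-paragraph verification of this.

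Now apply this to $G$. Since $G$ is a disjoint union of edges it has no adjacent edges, so $PG$ is the set of \emph{all} pairs of edges, and every vertex of $G$ has degree $1$, being an endpoint of a unique edge; in particular if $\{e,f\}\in A$ then $f$ and $e$ are vertex-disjoint, so any endpoint of $f$ is not a vertex of $e$. The trivial drawing has every pair of edges crossing zero times, so it $2$-realises $\emptyset$. For each pair $\{e,f\}\in A$ pick an endpoint $v$ of $f$ and perform the finger move on $e$ and $v$; because $f$ is the only edge incident to $v$, this move flips the crossing parity of $\{e,f\}$ and of nothing else. Performing one such move for each of the finitely many pairs of $A$, in any order (the effect of a finger move on the vector of crossing parities does not depend on the current drawing), we obtain a drawing in which exactly the pairs of $A$ cross an odd number of times and every pair of $PG\setminus A$ crosses an even number of times. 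Hence $A$ is $2$-realisable.

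The only genuine content is the finger move, so the step I would treat most carefully is its justification: that the connecting arc $\gamma$ can always be chosen disjoint from the vertex set and from $e\setminus\{p\}$, that the tongue can be made thin enough to follow $\gamma$ closely and hence cross each other edge exactly where $\gamma$ does (twice) while encircling only $v$ inside $D$, and that the resulting curve is still a simple arc, so that successive finger moves act independently on crossing parities. All of this is routine because the drawing has only finitely many edges and finitely many crossings, but it is the heart of the argument nonetheless.
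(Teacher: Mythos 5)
Your proposal is correct and is essentially the paper's own argument: the paper also starts from disjoint straight segments and, for each pair $\{e,f\}\in A$, pushes a thin finger of $e$ along an arc $\gamma$ to an endpoint $v$ of $f$ and around it (the strip-plus-semicircle in Figure \ref{F:epsilonnbd}), which flips exactly the parity of $\{e,f\}$ because each vertex has degree one. The only cosmetic difference is that the paper chooses $\gamma$ to avoid all edges and vertices and controls interactions between successive fingers directly, whereas you allow $\gamma$ to cross edges and invoke the even out-and-back count; both handle the bookkeeping equally well.
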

\begin{proof}
First draw the edges of $G$ as disjoint line segments on the plane. Next, for every pair of edges $\{e,f\} \in A$, join an interior point of $e$ to one of the endpoints $v$ of $f$ by a simple curve $\gamma$ whose interior does not meet any edge or vertex, and then replace $\gamma$ by a curve passing along the boundary of a thin strip centred on $\gamma$ and a small semicircle centred at $v$, as in Figure \ref{F:epsilonnbd}. Complete this construction successively for each of the required curves $\gamma$, ensuring that at each step the curve only meets previously drawn curves at transverse crossings and that there are at most a finite number of such crossings, and furthermore, no curves starting on a common edge meet at all. 
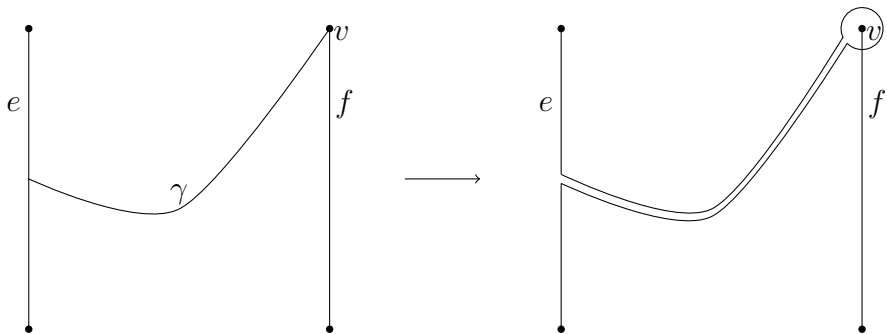
\begin{figure}[h!]
\begin{center}
\begin{tikzpicture}[scale=2];
\draw (0,0) -- (0,2);
\draw (2,0) -- (2,2);
\draw (-.1,1.5) node {$e$};
\draw (2.1,1.5) node {$f$};
\draw (2.08,1.97) node {$v$};
\draw (1,.9) node {$\gamma$};
\draw  plot [smooth, tension=.5] coordinates {  (0,1) (1,.8) (2,2) };
\fill  (2,0) circle (.7pt);
\fill  (2,2) circle (.7pt);
\fill  (0,0) circle (.7pt);
\fill  (0,2) circle (.7pt);
\draw[->] (2.5,1) -- (3,1);
\end{tikzpicture}
\hskip.5cm
\begin{tikzpicture}[scale=2];
\draw (0,0) -- (0,.97);
\draw (0,1.03) -- (0,2);
\draw (2,0) -- (2,2);
\draw  plot [smooth, tension=.5] coordinates {  (0,1.03) (1,.8) (1.875,1.945) };
\draw  plot [smooth, tension=.5] coordinates { (0,.97) (1,.75) (1.9,1.9) };
\draw [domain=-136:205] plot ({2+.14*cos(\x)}, {2+.14*sin(\x)});
\draw (-.1,1.5) node {$e$};
\draw (2.1,1.5) node {$f$};
\draw (2.08,1.97) node {$v$};
\fill  (2,0) circle (.7pt);
\fill  (2,2) circle (.7pt);
\fill  (0,0) circle (.7pt);
\fill  (0,2) circle (.7pt);
\end{tikzpicture}
\end{center}
\caption{Construction of $2$-realisable drawings}
\label{F:epsilonnbd}
\end{figure}
\end{proof}

\begin{remark} \label{r:connect}
Suppose we have a graph $G$ which is a disjoint union of $N$ edges and a subset $A\subset PG$ that cannot be 2-realised by a tolerable drawing. Take a cycle $G'$ of length $2N$ with $G$ being its subgraph of even labelled edges (in some cyclic order). Consider an arbitrary $2$-realisation of $A$ and add to it odd labelled edges of $G'$ arbitrarily to get a drawing of $G'$. Now define the subset $A'$ of pairs of independent edges of $G'$ from the drawing (a pair belongs to $A'$ if the edges cross an odd number of times). Then $A'$ is $2$-realisable by design, but the drawing is not tolerable, because if it were, then the drawing of $G$ corresponding to $A$ would have also been tolerable.
\end{remark}

For $n, m \in \mathbb{N}$, denote $I=\{1, 2, \dots, n\}$ and let $S$ be a set,  of cardinality $m$, of subsets of $I$. We define $G$ to be the union of $N = n + m$ pairwise disjoint edges which we label $e_i, \, i \in I, \; e_s, \, s \in S$. We then define $A$ to be the set of pairs $\{i,s\}$, where $i \in s$. In a tolerable drawing, the first $n$ edges must be pairwise disjoint, the last $m$ edges must be pairwise disjoint, and then one of the first $n$ edges $e_i$ crosses one of the last $m$ edges $e_s$ if and only if $i \in s$.

\begin{lemma} \label{l:intol}
  In the above notation, for $n=5$ and $S=\{I\}\cup \{(i,j) :  1\le i<j\le n\}$, there is no tolerable drawing that 2-realises $A$.
\end{lemma}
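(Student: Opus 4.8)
The plan is to argue by contradiction. Suppose $D$ is a tolerable drawing of $G$ that $2$-realises $A$. Since $G$ is a disjoint union of edges, every pair of edges is independent, so the hypotheses pin down the crossing pattern of $D$ completely: being tolerable, $D$ has every pair of edges crossing at most once, and $2$-realising $A$ forces a pair to cross exactly once when it lies in $A$ and not at all otherwise. Thus in $D$ the five edges $e_1,\dots,e_5$ are pairwise disjoint, the eleven edges $e_I$ and $e_{ij}$ ($1\le i<j\le 5$) are pairwise disjoint, each $e_{ij}$ meets exactly the two edges $e_i$ and $e_j$ and does so once apiece, and $e_I$ meets each $e_i$ exactly once. (It will turn out that $e_I$ plays no role.)

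The key step is to pass to a cleaner subdrawing and then contract. For each pair $i<j$ let $m_{ij}$ be the \emph{middle arc} of $e_{ij}$, namely the sub-arc of $e_{ij}$ bounded by its two crossing points with $e_i$ and with $e_j$. The red edges being disjoint, those two points are distinct, so $m_{ij}$ is a genuine simple arc; its interior avoids every red edge (for $k\notin\{i,j\}$ the whole of $e_{ij}$ avoids $e_k$, and $e_{ij}$ meets $e_i$ and $e_j$ only at the endpoints of $m_{ij}$) and it avoids every other blue edge, hence every other middle arc (distinct blue edges are disjoint, and for fixed $i$ the points $e_{ij}\cap e_i$ are distinct as $j$ varies, distinct $e_{ij}$ being disjoint). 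Hence $D':=e_1\cup\dots\cup e_5\cup\bigcup_{i<j}m_{ij}$ is drawn with no crossings at all. Regard $D'$ as a graph $\Gamma$, subdividing each $e_i$ at the (at most four) interior points where middle arcs land; then each $e_i$ is a path $P_i$ of $\Gamma$, the paths $P_1,\dots,P_5$ are vertex-disjoint, and each $m_{ij}$ is an edge of $\Gamma$ joining a vertex of $P_i$ to a vertex of $P_j$.

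Now contract each path $P_i$ to a single vertex $v_i$. Contracting vertex-disjoint connected subgraphs of a graph drawn without crossings in the plane yields a crossing-free drawing of the resulting minor; here that minor has vertices $v_1,\dots,v_5$ and, for every pair $i<j$, exactly one edge $v_iv_j$ (coming from $m_{ij}$) — that is, it is $K_5$. This contradicts the non-planarity of $K_5$, so no tolerable drawing can $2$-realise $A$.

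The essential idea is that the subsets $\{i,j\}\subseteq I$ are chosen so that, after collapsing each $e_i$ to a point, the prescribed crossing pattern becomes precisely the edge set of $K_5$; the same argument run with $S=\{\{i,j\}:1\le i<j\le n\}$ produces $K_n$, non-planar exactly for $n\ge 5$, which is why $n=5$ is the smallest relevant case and why the extra set $I\in S$ is inessential. The one place demanding care is the point-set bookkeeping of the second paragraph: one must check that $D'$ genuinely has no crossings and that the middle arcs are pairwise disjoint down to their endpoints, so that the contraction yields a clean copy of $K_5$ with no accidental identifications or parallel edges. Once that is nailed down, the contradiction is immediate.
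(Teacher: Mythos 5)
Your argument is correct, but it is genuinely different from the one in the paper. The paper keeps the edge $e_I$ at centre stage: it normalises the drawing of $e_1,\dots,e_5,e_I$ up to isotopy (five parallel segments crossed once each by $e_I$), discards the edges $e_{k,k+1}$, and then classifies each remaining $e_{ij}$ as \emph{left}, \emph{right} or \emph{mixed} according to which side of $e_I$ its two crossings lie on, deriving a contradiction by a case analysis (starting with $e_{1,5}$ mixed or left, using the $\Z_2\times\Z_2$ symmetry) in which some forbidden crossing is forced. Your proof instead ignores $e_I$ altogether: since tolerability plus $2$-realisation pins the crossing pattern down exactly, you extract the middle arcs $m_{ij}$, check they form with $e_1,\dots,e_5$ a crossing-free plane graph, and contract each $e_i$ to a vertex to exhibit a $K_5$ minor of a planar graph. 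Your bookkeeping for the middle arcs (distinct endpoints, interiors avoiding all other edges and arcs) is exactly what is needed, and the appeal to planarity being minor-closed is standard, so the contradiction with the non-planarity of $K_5$ is sound. What your route buys: it is shorter, conceptual rather than case-by-case, it proves the stronger statement in which $I$ is removed from $S$, and it explains structurally both why $n=5$ is the threshold and why the paper's Remark~\ref{r:nonsharp} for $n=4$ is consistent (contraction there only yields a planar $K_4$). What the paper's route buys: it is elementary and self-contained, using only the isotopy normal form and direct inspection rather than Kuratowski/Wagner, and the auxiliary edge $e_I$ is what makes that normal form and the left/right bookkeeping possible.
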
 

\begin{proof}
Suppose there exists a tolerable drawing that 2-realises $A$.
The graph $G$ here is
  the disjoint union of 16 edges.  Up to isotopy, the drawing of the edges $e_1, \dots, e_5, e_I$ looks like the one in Figure~\ref{F:e1nI}, where we relabel the edges $e_1, e_2, \dots, e_5$ if necessary in such a way that the edge $e_I$ crosses them in the increasing (or decreasing) order of labels.

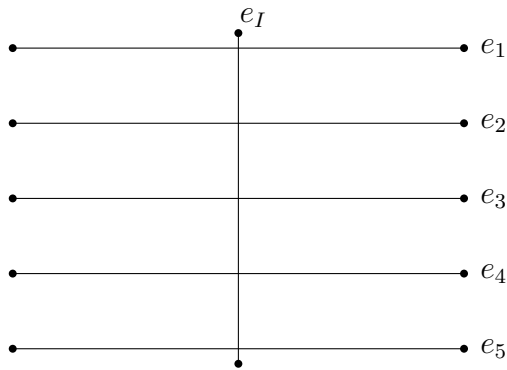
\begin{figure}[h!]
\begin{center}
\begin{tikzpicture}
\foreach \y in {3,2,1,0,-1} \draw (-3,\y) -- (3,\y)  ;
\foreach \y in {-1.2,3.2} \foreach \x in {0}
\fill  ({\x},{\y}) circle (1.5pt);
\foreach \y in {3,2,1,0,-1} \foreach \x in {-3,3}
\fill  ({\x},{\y}) circle (1.5pt);
\draw (0,-1.2) -- (0,3.2);
\draw(0.2,3.4) node {$e_I$};
\draw(3.4,3) node {$e_1$}; 
\draw(3.4,2) node {$e_2$}; 
\draw(3.4,1) node {$e_{3}$};
\draw(3.4,0) node {$e_{4}$};
 \draw(3.4,-1) node {$e_5$};
\end{tikzpicture}
\end{center}
\caption{Drawing of the edges $e_1, \dots, e_5, e_I$}
\label{F:e1nI}
\end{figure}

We can ignore the edges $e_{k,k+1}, k=1, 2, 3, 4$, as they can be inserted at any stage. This leaves us with the six edges, $e_{1,3}, e_{1,4}, e_{1,5}, e_{2,4}, e_{2,5}$ and $e_{3,5}$. Now for every $s=\{i,j\} \in S, \; i \ne j$, the edge $e_s$ crosses the edges $e_i$ and $e_j$, once each, and does not cross any other edges of $G$. Referring to Figure~\ref{F:e1nI} we say that the edge $e_s$ is \emph{left} (respectively \emph{right}) if the crossings of both $e_i$ and $e_j$ with $e_s$ occur to the left (respectively to the right) of their crossings with $e_I$. Otherwise call the edge $e_s$ \emph{mixed}.
Consider $e_{1,5}$. We need only consider the two possibilities that $e_{1,5}$ is either mixed or left since we can use the symmetry group $\Z_2 \times \Z_2$ given by the reflection about the line containing $e_I$ and the reflection (followed by relabelling the $e_i$'s in the reverse order) about the line containing $e_3$. If $e_{1,5}$ is mixed, then $e_{2,4}$ must be right (up to symmetry), and then $e_{1,3}$ is left, and then $e_{2,5}$ is right, and then $e_{1,4}$ and $e_{3,5}$ cross. If $e_{1,5}$ is left, then there could be no more than two other left edges, and up to reflection, we can have either $e_{1,3}, e_{1,4}$, or $e_{1,3}, e_{3,5}$, or $e_{1,4}, e_{2,4}$. It is then easy to see that in each of the three cases, we get unwanted crossings one way or another after adding the remaining three edges. If apart from $e_{1,5}$, there is only one or no left edge, a contradiction is also readily found.
\end{proof}

\begin{remark} \label{r:nonsharp}
In contrast to the above lemma, when $n=4$, a tolerable drawing exists even when we take for $S$ the set of all subsets of $\{1,2,3,4\}$ as shown in Figure~\ref{F:n=4}.
\end{remark}

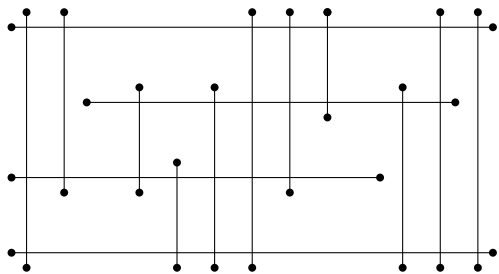
\begin{figure}[h!]
\begin{center}
\begin{tikzpicture}
\draw (-3.2,0) -- (3.2,0); \draw (-3.2,3) -- (3.2,3);
\draw (-2.2,2) -- (2.7,2); \draw (-3.2,1) -- (1.7,1);
\draw (0,-0.2) -- (0,3.2);
\draw (0.5,0.8) -- (0.5,3.2); \draw (1,1.8) -- (1,3.2);
\draw (-0.5,-0.2) -- (-0.5,2.2); \draw (-1,-0.2) -- (-1,1.2); \draw (-1.5,0.8) -- (-1.5,2.2);
\draw (-2.5,0.8) -- (-2.5,3.2); \draw (-3,-0.2) -- (-3,3.2);
\draw (2,-0.2) -- (2,2.2); \draw (2.5,-0.2) -- (2.5,3.2); \draw (3,-0.2) -- (3,3.2);
\foreach \y in {3.2} \foreach \x in {-3,-2.5,0,.5,,1,2.5,3}
\fill  ({\x},{\y}) circle (1.5pt);
\foreach \y in {0,3} \foreach \x in {-3.2,3.2}
\fill  ({\x},{\y}) circle (1.5pt);
\foreach \y in {2.2} \foreach \x in {-1.5,-.5,2}
\fill  ({\x},{\y}) circle (1.5pt);
\foreach \y in {2} \foreach \x in {-2.2,2.7}
\fill  ({\x},{\y}) circle (1.5pt);
\foreach \y in {1.8} \foreach \x in {1}
\fill  ({\x},{\y}) circle (1.5pt);
\foreach \y in {1.2} \foreach \x in {-1}
\fill  ({\x},{\y}) circle (1.5pt);
\foreach \y in {1} \foreach \x in {-3.2,1.7}
\fill  ({\x},{\y}) circle (1.5pt);
\foreach \y in {.8} \foreach \x in {-2.5,-1.5,.5}
\fill  ({\x},{\y}) circle (1.5pt);
\foreach \y in {-.2} \foreach \x in {-3,-1,-.5,0,2,2.5,3}
\fill  ({\x},{\y}) circle (1.5pt);
\end{tikzpicture}
\end{center}
\caption{Tolerable drawing for $n=4$ and $S$ the power set of $\{1,2,3,4\}$, with the subsets of cardinality $0$ and $1$ omitted}
\label{F:n=4}
\end{figure}

\begin{remark} \label{r:intolerableK5}
We have seen above that for $G$ equal to $K_4$, $K_{2,3}$ or $K_{1,1,3}$, every 2-realisable subset $A\subset PG$ can be 2-realised by a tolerable drawing. Consider the drawing of  $K_5$ in Figure \ref{F:k53}. It is a 2-realisation of the set $A=\{(1,2)(3,4), (1,3)(2,4), (1,4)(2,3)\}$, and it fails to be tolerable only because edge  (13) crosses edge (45) twice (in opposite directions); the other edges adjacent to vertex 5 do not cross independent edges of the $K_4$ having vertices $1,2,3,4$, and the drawing of this $K_4$ is tolerable. We suspect that there is no tolerable drawing of $K_5$ that 2-realises the set $A$, but we have not been able to prove this. Note that the problem is not even obviously a finite problem since there is no a priori bound on the number of crossings of adjacent edges. 

\begin{figure}[h!]
\begin{center}
\definecolor{blue}{rgb}{0,0,1}
\begin{tikzpicture}[scale=1.5];
\draw [thick,-]  (1.5,0) -- (0,1)  ;
\draw [thick,-]  (1.5,0) -- (0,-1)  ;
\draw [thick,-]  (0,1) -- (0,-1)  ;
\draw [thick,blue,-]  (-2,0) -- (-1,0)  ;
\draw [thick] plot [smooth, tension=.5] coordinates { (-1,0) (0,2)  (.75,.3)   (0,-1)};
\draw [thick] plot [smooth, tension=.5] coordinates { (-1,0) (0,-2)  (.75,-.3)   (0,1)};
\draw [thick] plot [smooth, tension=.5] coordinates { (-1,0) (0,-1.5)  (.4,-.4) (0,0) (-.4,.4) (0,1.5)    (1.5,0)};
\draw [thick,blue] plot [smooth, tension=.5] coordinates { (-2,0) (-1,.5) (-.5,0)  (0,-1)};
\draw [thick,blue] plot [smooth, tension=.5] coordinates { (-2,0) (-1,-.5) (-.5,0)  (0,1)};
\draw [thick,blue] plot [smooth, tension=.5] coordinates { (-2,0) (0,-2.5)   (1.5,0)};
\fill  (1.5,0) circle (2.5pt);
\fill  (0,-1) circle (2.5pt);
\fill  (0,1) circle (2.5pt);
\fill  (-1,0) circle (2.5pt);
\fill [blue]  (-2,0) circle(2.5pt);
\draw (1.7,0) node {$1$};
\draw (-2.2,0) node {$5$};
\draw (-1.13,0.23) node {$3$};
\draw (-.2,1) node {$4$};
\draw (-.2,-1) node {$2$};

\end{tikzpicture}

\end{center}
\caption{A bad drawing of $K_5$}
\label{F:k53}
\end{figure}
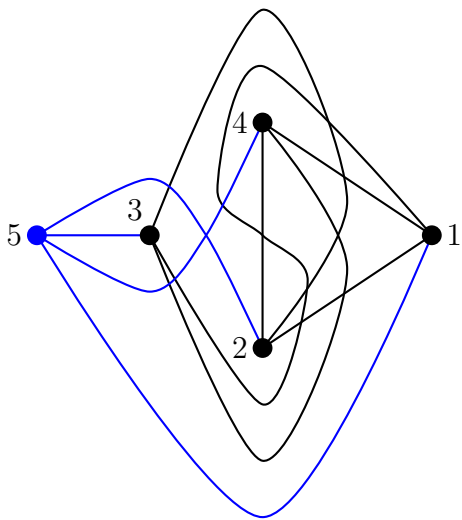

\end{remark}

\begin{remark} Another open question is as follows: does there exist a 
graph $G$ and an integer $i$ for which there exist  2-realisable subsets of $PG$ of cardinality $i$ but for none of these is there a tolerable drawing? One might say that such an integer $i$ is \emph{intolerable} for $G$.
The results of this paper show that no such phenomenon exists for any of $K_5,K_{3,3}$ or $ K_6$. Furthermore, there can be no intolerable integers for graphs that are disjoint union of edges. To see this, consider all our edges drawn as straight line segments passing through a single point, then replace each of them by a nearby parallel segment, so that all the crossing points are pairwise distinct, and then remove the crossings one by one by shortening the segments until we get the desired number of crossings.
\end{remark}


\section{Tolerable drawings for the graphs $K_5,K_{3,3}$ and $K_6$}\label{K5K33K6}

The complete graph $K_5$ has 10 edges and $PK_5$ has 15 elements. (We note in passing that for the complete graph $K_n$, the set $PK_n$ is the edge set of the Kneser graph $KG_{n,2}$).
Up to relabelling, only 5 of these subsets of $PK_5$ can be 2-realised by good drawings; see  \cite[Figure 3.1]{Raf} or \cite[Figure 1.7]{Sch2}. They each have 1, 3 or 5  crossings.  The first 3 drawings of Figure \ref{figure:k5} are good, and have 1, 3 and 5  crossings respectively.  The remaining drawings of Figure \ref{figure:k5} are tolerable and have 7, 9, 11, 13 and 15 independent crossings respectively. In particular, there is a tolerable drawing in which all 15 pairs of independent edges cross. To set this observation in context, note that $K_5$ cannot be drawn as a \emph{thrackle} in the plane \cite{CMN}; that is, it can't be drawn in the plane so that each pair of independent edges cross exactly once, and adjacent edges do not cross. Moreover, $K_5$ cannot be drawn as a \emph{generalized thrackle} in the plane \cite{CN09}; that is, it can't be drawn in the plane so that each pair of independent edges cross an odd number of times, and adjacent edges cross an even number of times. Also, $K_5$ cannot be drawn as a \emph{superthrackle} in the plane \cite{AS}; that is, it can't be drawn in the plane so that each pair of  edges (independent or not) cross exactly once.

As far as the 2-realisable subsets of $PK_5$ are concerned, note that $PK_5$ has $2^{15}$ subsets, and that the group acting here is the symmetry group $S_5$ of $K_{5}$. 
There is no difficulty in conducting the kind of symmetry reduction we employed for $K_{2,3}$ is Section~\ref{Section:K23}. For example,  one finds that up to relabelling, there are 9 essentially distinct subsets of  $PK_{5}$ of cardinality 3. They are:
\begin{enumerate}
\item[1.] $\{(12)(34),\ (13)(24),\ (14)(23)\}$
\item[2.] $\{(12)(34),\ (13)(24),\ (12)(35)\}$
\item[3.] $\{(12)(35),\ (13)(24),\ (14)(23)\}$
\item[4.] $\{(12)(34),\ (12)(35),\ (12)(45)\}$
\item[5.] $\{(12)(34),\ (12)(35),\ (15)(24)\}$
\item[6.] $\{(12)(34),\ (13)(25),\ (14)(25)\}$
\item[7.] $\{(12)(34),\ (15)(23),\ (14)(25)\}$
\item[8.] $\{(14)(23),\ (15)(23),\ (14)(25)\}$
\item[9.] $\{(14)(23),\ (13)(25),\ (15)(24)\}$.
\end{enumerate}
By Theorem~\ref{K5K33con}, each of these subsets is 2-realisable. However, the difficulty is in determining whether or not a given set can be 2-realised by a tolerable drawing. For $K_5$, we have not been able to resolve this problem, even for subsets of  $PK_{5}$ of cardinality 3; see Remark~\ref{r:intolerableK5} above, in which the subset $A$ in Figure \ref{F:k53} corresponds to case 1 above.

The graph $K_{3,3}$ has 9 edges and 18 pairs of independent edges. 
Harborth \cite{Harbo} determined that there are 102  good drawings of $K_{3,3}$ up to isomorphism; there are 1, 9, 33, 48, and 11 good drawings with 1, 3, 5, 7, and 9 crossings, respectively.
In Figure \ref{figure:k33}, the first 5 drawings are good. The remaining drawings are tolerable and have 11, 13, 15 and 17 independent crossings, respectively.

The graph $K_{6}$ has 15 edges and 45 pairs of independent edges. It is known that $K_{6}$ only has good drawings with $i$ crossings for  $3\le i\le 12$ and for $i=15$; see \cite{Raf,GH}. Figure \ref{F:k6;3-15} gives examples of such good drawings. 
Figures \ref{F:k6;16-24} through \ref{F:k6;37-40} give tolerable drawings having $i$  independent crossings for  $i=13,14 $ and $16\le i\le 40$. Note that in the drawings in Figures \ref{F:k6;25-36} and \ref{F:k6;37-40}, 
the idea is that one extends the red lines out and connects them up to a 6th vertex (at infinity, if one likes).  It is perhaps easiest to keep track of the independent crossings in these diagrams by comparing each drawing having $i$ independent crossings with the drawing having $i+3$ independent crossings.
Note that the first drawing in Figure \ref{F:k6;25-36} and the last drawing in Figure \ref{F:k6;37-40} have a 5-fold symmetry and are easy to understand; in each of these drawings the blue and black edges give a tolerable drawing of $K_5$ with 15 independent crossings. In the first case, each red line has 2 independent crossings with blue edges, while in the second case, each red line has 3 independent crossings with blue edges and 2 independent crossings with black edges.

\begin{figure}[h]
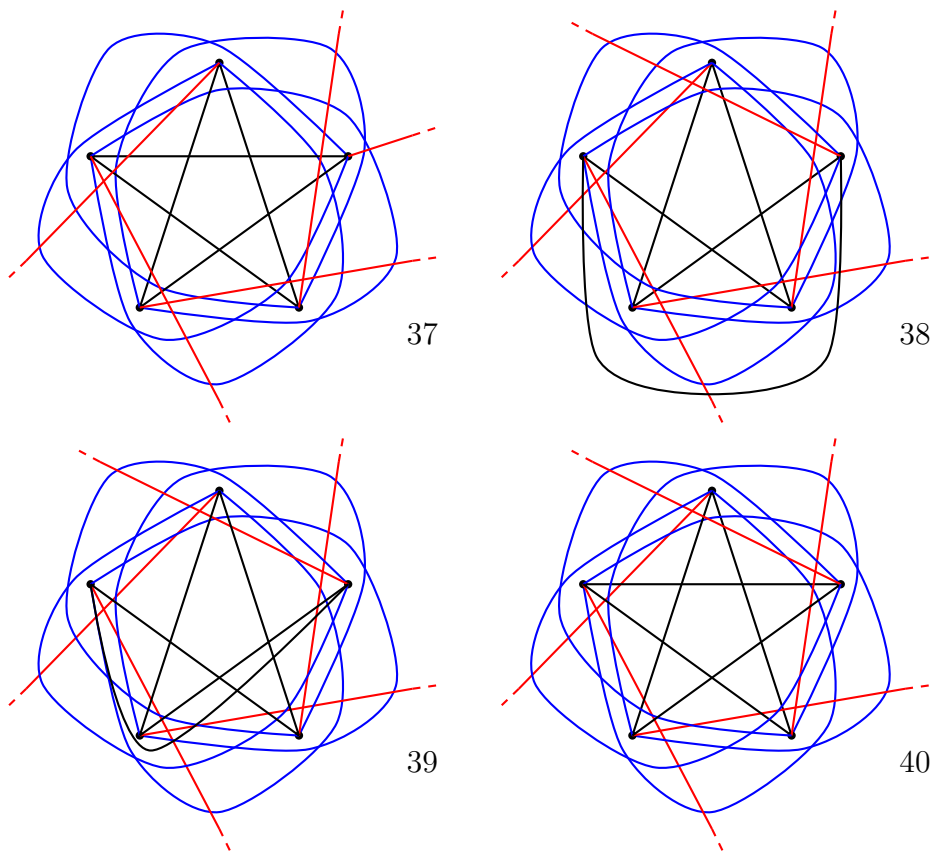

\centering

\caption{Tolerable drawings of $K_6$ with 37 to 40 independent crossings.}
\label{F:k6;37-40}
\end{figure}


\section{Deleted product cohomology and the van Kampen--Wu invariant}\label{Wu}
Consider a graph $G$, which we consider as a \emph{cell complex}; its ``cells'' are just its vertices and its edges. The \emph{deleted product space} $G^*$ of $G$ is the subcomplex of the cell complex $G\times G$  obtained by deleting all cells having nontrivial intersection with the diagonal.  A 1-cell in $G^*$ is of the form $(v,e)$ or $(e,v)$, where $v$ is a vertex of $G$ and $e$ is an edge that is not incident to $v$. For ease of presentation, we will denote these 1-cells $ve$ and $ev$ respectively. A 2-cell in $G^*$ is given by the pair $(e_1,e_2)$, where $e_1,e_2$ are independent edges. We will denote this 2-cell $e_1e_2$. Notice that the group $\Z_2$ acts  on $G^*$; the nontrivial involution is determined by the map $\tau$ on $G\times G$ sending $(x,y)$ to $(y,x)$. Since the map $\tau$ is fixed point free, the $\Z_2$ action is free. So the quotient $\overline{G^*}:=G^*/\Z_2$ is also a cell complex. For further information on the deleted product space of a graph, see Mark de Longueville's excellent text \cite{Long}, which provides a clear and clean exposition of this material.

We will be working with the cohomology of $\overline{G^*}$, or equivalently, with the cohomology of the $\Z_2$-invariant cocycles on $G^*$; we say that these cocycles  are \emph{symmetric}. Specifically, we work with the cohomology with coefficients in $\Z_2$. So a 2-cochain is given by a function from the set of 2-cells to $\Z_2$; that is, it is just a marking of the 2-cells with the symbols 0 or 1, or, if you like, it is determined by a subset of the set of 2-cells (given by the 2-cells labelled 1), or again as a formal sum over $\Z_2$ of 2-cells. Of course, a symmetric cochain is just a symmetric labelling. So, for example a symmetric 2-cochain is a sum of terms of the form $e_1e_2+e_2e_1$, where $e_1,e_2$ are independent edges. Similarly, a 1-cochain is a formal sum over $\Z_2$ of 1-cells, so a symmetric 1-cochain is a sum of terms of the form $ev+ve$.

The differential of a 1-cochain $ev$ (resp.~$ve$) is the sum of the 2-cochains of the form $ee'$ (resp.~$e'e$) where $e,e'$ are independent and $e'$ is incident to $v$. The differential of any 2-cochain is 0 (just because we are working with a cell complex of dimension 2), so every  2-cochain is a 2-cocycle. A 2-cocycle is \emph{exact} if it is the differential of a 1-cochain.

For a given drawing $f$ of $G$ in the plane, we define the symmetric 2-cocycle $\Phi_f(G)$ as follows: if $e_1,e_2$ are independent edges, we assign the number 1 to the 2-cells $e_1e_2$ and $e_2e_1$ if $e_1,e_2$ cross an odd number of times, and 0 otherwise. 

\begin{definition} The \emph{van Kampen  obstruction} is the symmetric cohomology class $\mathfrak o(G)$ of $\Phi_f(G)$.
\end{definition}

Where useful, we will also consider the corresponding form $\overline\Phi_f(G)$ on $\overline{G^*}$, and identify the symmetric cohomology class $\mathfrak o(G)$ with the element $[\overline\Phi_f(G)]\in H^2(\overline{G^*},\Z_2)$. 

\begin{KSWT}
The class $\mathfrak o(G)$ is well defined,  independent of the drawing. Moreover, if $\mathfrak o(G)=[\alpha]$ for some symmetric 2-cocycle $\alpha$, then there is a drawing $f$ of $G$ in the plane with $\alpha=\Phi_f(G)$.\end{KSWT}

Notice that in the obvious manner, every subset $A\in PG$ determines a symmetric 2-cocycle, and every symmetric 2-cocycle determines a subset $A\in PG$. The van Kampen--Shapiro--Wu Theorem can be reformulated in terms of 2-realisable subsets:

\begin{theorem}\label{C1}  Consider a graph $G$ with van Kampen symmetric cohomology class $\mathfrak o(G)\in H^2(\overline{G^*},\Z_2)$. Then a subset $A$ of $PG$ is a 2-realisable crossing set if and only if the element of the symmetric cohomology group $H^2(\overline{G^*},\Z_2)$ corresponding to $A$ is equal to $\mathfrak o(G)$.
\end{theorem}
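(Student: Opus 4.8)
The plan is to show that Theorem~\ref{C1} is essentially a restatement of the van Kampen--Shapiro--Wu Theorem, once one unwinds the dictionary between subsets of $PG$ and symmetric $2$-cocycles. First I would make that dictionary precise: given $A\subset PG$, let $\alpha_A$ be the symmetric $2$-cochain assigning $1$ to the pair of $2$-cells $e_1e_2,e_2e_1$ exactly when $\{e_1,e_2\}\in A$, and $0$ otherwise. Since $\overline{G^*}$ is a $2$-dimensional cell complex, the coboundary of every $2$-cochain vanishes, so $\alpha_A$ is automatically a symmetric $2$-cocycle, and $A\mapsto\alpha_A$ is a bijection from subsets of $PG$ to symmetric $2$-cocycles on $G^*$ (equivalently $2$-cochains on $\overline{G^*}$). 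Under this bijection, for any drawing $f$ of $G$, the cocycle $\Phi_f(G)$ is exactly $\alpha_A$ where $A$ is the set of pairs of independent edges crossing an odd number of times in $f$; this is immediate from the definition of $\Phi_f(G)$.

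The next step is the two implications. Suppose $A$ is $2$-realisable, $2$-realised by a drawing $f$. Then by the construction above $\Phi_f(G)=\alpha_A$, and by definition $\mathfrak o(G)=[\Phi_f(G)]=[\alpha_A]$, so the symmetric cohomology class of $A$ equals $\mathfrak o(G)$. Conversely, suppose the class of $\alpha_A$ in $H^2(\overline{G^*},\Z_2)$ equals $\mathfrak o(G)$. By the van Kampen--Shapiro--Wu Theorem, since $\mathfrak o(G)=[\alpha_A]$ and $\alpha_A$ is a symmetric $2$-cocycle, there is a drawing $f$ of $G$ in the plane with $\Phi_f(G)=\alpha_A$. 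But $\Phi_f(G)=\alpha_A$ means precisely that a pair of independent edges crosses an odd number of times in $f$ if and only if that pair lies in $A$; that is, $f$ is a $2$-realisation of $A$. Hence $A$ is $2$-realisable.

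The only genuinely substantive point to articulate carefully is the equivalence between ``symmetric $2$-cocycles on $G^*$'' and ``$2$-cochains on $\overline{G^*}$'', and the corresponding identification of cohomology; this is what lets me pass back and forth between $\mathfrak o(G)\in H^2(\overline{G^*},\Z_2)$ and the symmetric-cocycle picture in which the van Kampen--Shapiro--Wu Theorem is phrased in the excerpt. I would note that because the $\Z_2$-action on $G^*$ is free, the quotient map $G^*\to\overline{G^*}$ induces an isomorphism between the $\Z_2$-invariant cochain complex of $G^*$ and the cochain complex of $\overline{G^*}$ with $\Z_2$-coefficients, and hence on cohomology; this was already flagged in the discussion preceding the statement, so I would simply invoke it. Everything else is a direct translation, so I do not anticipate a real obstacle --- the theorem is a reformulation, and the ``proof'' is the verification that the reformulation is faithful.

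\begin{proof}
To each subset $A\subset PG$ associate the symmetric $2$-cochain $\alpha_A$ on $G^*$ that assigns $1$ to the pair of $2$-cells $e_1e_2$ and $e_2e_1$ precisely when $\{e_1,e_2\}\in A$, and $0$ to all other $2$-cells. Since $\overline{G^*}$ has dimension $2$, every $2$-cochain is a cocycle, so $\alpha_A$ is a symmetric $2$-cocycle, and $A\mapsto\alpha_A$ is a bijection between subsets of $PG$ and symmetric $2$-cocycles on $G^*$; via the isomorphism between the complex of $\Z_2$-invariant cochains on $G^*$ and the $\Z_2$-cochain complex of $\overline{G^*}$ (the $\Z_2$-action being free), it also identifies $A$ with an element $[\overline\alpha_A]\in H^2(\overline{G^*},\Z_2)$, which is the ``symmetric cohomology class corresponding to $A$'' in the statement. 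By the very definition of $\Phi_f(G)$, for any drawing $f$ of $G$ we have $\Phi_f(G)=\alpha_A$ where $A=\{\{e_1,e_2\}\in PG : e_1,e_2 \text{ cross an odd number of times in } f\}$.

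Now suppose $A$ is $2$-realisable, and let $f$ be a drawing that $2$-realises $A$. Then $\Phi_f(G)=\alpha_A$ by the remark just made, hence $\mathfrak o(G)=[\Phi_f(G)]=[\alpha_A]$, so the cohomology class corresponding to $A$ equals $\mathfrak o(G)$. Conversely, suppose the class corresponding to $A$ equals $\mathfrak o(G)$, i.e.\ $\mathfrak o(G)=[\alpha_A]$ for the symmetric $2$-cocycle $\alpha_A$. By the van Kampen--Shapiro--Wu Theorem there is a drawing $f$ of $G$ in the plane with $\Phi_f(G)=\alpha_A$. By the correspondence above, this says exactly that a pair of independent edges crosses an odd number of times in $f$ if and only if it belongs to $A$; that is, $f$ is a $2$-realisation of $A$, so $A$ is $2$-realisable.
\end{proof}
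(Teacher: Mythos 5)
Your proposal is correct and matches the paper's treatment: the paper gives no separate proof, presenting Theorem~\ref{C1} as an immediate reformulation of the van Kampen--Shapiro--Wu Theorem via the evident bijection between subsets of $PG$ and symmetric $2$-cocycles, which is exactly the dictionary you spell out and then unwind in both directions.
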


A key application of this machinery is:

\begin{WT}
$\mathfrak o(G)=0$ if and only if $G$ is planar.
\end{WT}



\begin{remark} We should mention that it is well known that the early literature on the deleted product cohomology contained a number of errors. 
Errors in \cite{Pa} were observed by Ummel \cite{Umm} and Barnett and Farber \cite{BF}.
An error in  \cite{Co} was discussed by  Sarkaria  \cite{Sark} and Barnett and Farber \cite{BF}.  Sarkaria's paper  \cite{Sark} is a very attractive and readable work, but it also has errors that have been discussed by Skopenkov \cite{Skop}, van der Holst \cite{vdH2}, Barnett \cite{Ba},  and Schaefer \cite{Sch}.  
\end{remark}

\begin{remark} The history of this material might also merit a few comments, since there are confusions in the literature, possibly in part reflecting the divide between the fields of topology and combinatorics. The original notions came from van Kampen's 1932 paper \cite{vanK}, just 2 years after Kuratowski's famous 1930 Theorem 
 and 2 years prior to Hanani's version  \cite{Choj} of what is now known as the Hanani--Tutte Theorem.
The ``deleted product'' obstruction was  introduced by van Kampen  for measuring the non-embeddability of an $n$-dimensional simplicial complex in $\R^{2n}$, but only for $n \ge 3$. This work was later clarified, reformulated in cohomological language, and extended to dimension 2 in 1955 by Wu (subsequently translated into English \cite{WuI,WuII,WuIII} in 1958-1959, and later elaborated with a slightly different argument in his 1965 book \cite{Wu}, in English) and in 1957 by Shapiro \cite{Shap}. 
In fact, Wu was a well known topologist, in part because of his 1950 work on what is known as the \emph{Wu class}, and  Wu's 1955--1959 embedding work was widely read by topologists at the time; for example, see \cite{Haef1}. Later in 1970, as Levow \cite{Lev}  writes,``Tutte \cite{Tutte} rediscovered the van Kampen--Shapiro--Wu characterization of planar graphs''. 
 It seems that Tutte's paper brought this topic to the attention of combinatorialists, and motivated much of the subsequent investigations; although Tutte's paper uses topological arguments and some topological language (\emph{chain} and \emph{coboundary}), it never uses the word \emph{cohomology}, or even  \emph{differential}. The theorem known as the \emph{Hanani--Tutte Theorem}, which says that a graph is planar if it can be drawn in the plane so that each pair of independent edges cross an even number of times, is an immediate consequence of the result we have called the Wu--Tutte Theorem, and many would regard the two as being essentially the same result. Wu presented  the Wu--Tutte theorem in \cite{WuII} and also proved it in \cite[p.~210]{Wu};  he called it \emph{Kuratowski's Theorem}!   
\end{remark}



\section{Drawings of $K_5$ and $K_{3,3}$}\label{Ks}

We will require the following beautiful result, which was stated without proof in \cite{Ab}, and proved in Abrams' thesis \cite[Theorem 5.1]{Ath}.

\begin{theorem}\label{cs}
For $K_5$ and $K_{3,3}$, the deleted product is a closed surface. Moreover, $K_5$ and $K_{3,3}$ are the only graphs for which the deleted product is a closed surface.\end{theorem}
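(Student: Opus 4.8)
The plan is to analyze the deleted product complex $\overline{G^*}$ (equivalently $G^*$) combinatorially and determine exactly when it is a closed surface, i.e. a compact manifold without boundary of dimension $2$. First I would set up the local picture: a point in the interior of a $2$-cell $e_1e_2$ automatically has a disk neighbourhood, and a point in the interior of a $1$-cell $ve$ (or $ev$) has a neighbourhood that is a union of half-disks, one for each $2$-cell incident to that $1$-cell. Recalling the differential from Section~\ref{Wu}, the $2$-cells incident to $ve$ are precisely the cells $e'e$ where $e'$ is incident to $v$ and independent of $e$; thus the number of $2$-cells along the $1$-cell $ve$ equals the number of edges at $v$ that are not adjacent to $e$, call it $d(v,e)$. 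For $G^*$ to be a surface one needs $d(v,e)=2$ for every such incident pair, and similarly one needs a circle-of-disks (rather than a wedge of several) link around each vertex $(u,v)$ of $G^*$, where $u,v$ are distinct non-adjacent vertices.

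Next I would translate the edge-condition $d(v,e)=2$ into a condition on $G$. If $v$ has degree $\delta$ and $e=xy$ is independent of $v$, then $d(v,e)$ counts the edges at $v$ other than $vx$ and $vy$ (when those exist); imposing $d(v,e)=2$ for all valid choices forces, after a short case analysis on whether $x,y$ are neighbours of $v$, that every vertex has degree exactly $4$, or we are in the bipartite situation where every vertex has degree $3$. Combined with the requirement that $G$ actually have independent edges and be connected (a disconnected or too-small graph fails the vertex-link condition or has no $2$-cells at all), the degree constraint already cuts the candidates down enormously: a $4$-regular graph in which the surface condition holds, or a $3$-regular bipartite-type graph. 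I would then examine the vertex links: around $(u,v)$ the $1$-cells are $ue'$ for $e'$ at $u$ independent of $v$ together with $f v$ for $f$ at $v$ independent of $u$, and the $2$-cells are $e'f$; the link is a single cycle exactly when this incidence structure is a single cycle, which forces strong regularity of the neighbourhoods of $u$ and $v$ and, together with the degree condition, pins the graph down to $K_5$ or $K_{3,3}$ (for which one separately verifies, or cites from \cite{Ath,Ab,Long}, that $G^*$ is in fact the orientable genus-$6$ surface, respectively a closed surface, so the two examples genuinely occur).

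For the converse direction — that $K_5$ and $K_{3,3}$ do give closed surfaces — the cleanest route is a direct Euler-characteristic plus link check: count $0$-, $1$-, $2$-cells of $G^*$ (for $K_5$: $20$ vertices, $60$ edges, $30$ faces; for $K_{3,3}$: $18,36,18$), confirm every link is a circle by the local computation above, and conclude that $G^*$ is a closed surface; this is essentially Theorem~\ref{cs} as proved in \cite{Ath}, so in the write-up I would either reproduce this verification or simply invoke it. I expect the main obstacle to be the vertex-link analysis: ruling out $4$-regular graphs other than $K_5$ (and $3$-regular bipartite graphs other than $K_{3,3}$) requires showing that the cyclic-link condition around every $(u,v)$ is so rigid that any ``extra'' vertices would create a link that is a disjoint union of two or more cycles, and making that argument clean — rather than a sprawling case check — is the delicate part. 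A useful organizing principle here is that the link of $(u,v)$ in $G^*$ is closely related to the structure of the bipartite graph between the neighbourhoods $N(u)\setminus(\{v\}\cup N(v))$ and $N(v)\setminus(\{u\}\cup N(u))$, and one wants this always to assemble into a single cycle; I would push on exactly that reformulation to keep the argument short.
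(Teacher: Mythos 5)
Your central computation---that the $2$-cells incident to the $1$-cell $ve$ are the cells $e'e$ with $e'$ an edge at $v$ independent of $e$, so that a closed surface forces ``exactly two edges at $v$ independent of $e$'' for every vertex $v$ and every edge $e$ missing $v$---is exactly the paper's starting point; the paper repackages it as condition~($*$) (deleting an edge together with the interiors of all adjacent edges leaves a disjoint union of cycles) and finishes with a short case analysis on the number of vertices, with no link analysis needed for the classification. Your reorganization (degree dichotomy plus rigidity of vertex links) is a legitimate route, and you are right that the edge--face incidence condition alone is only a necessary condition, so some link verification is ultimately needed for the two surviving graphs.

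However, there are concrete errors in your setup. First, the $0$-cells of $G^*$ are \emph{all} ordered pairs $(u,v)$ of distinct vertices: adjacency does not remove them, since the cell $(u,v)$ meets the diagonal only when $u=v$. Your link analysis ``around each vertex $(u,v)$ where $u,v$ are distinct non-adjacent vertices'' therefore misses most vertices, and for $K_5$ it misses all of them (every pair is adjacent), so as written your verification that $K_5^*$ is a closed surface checks no vertex links at all. The link at $(u,v)$ is the bipartite graph whose parts are the edges at $u$ avoiding $v$ and the edges at $v$ avoiding $u$, two such edges joined when disjoint; in $K_5$ (adjacent pair) this is $K_{3,3}$ minus a perfect matching, a hexagon, and in $K_{3,3}$ it is a square or a hexagon according as $u,v$ are adjacent or not---note that common neighbours are not removed from the parts, contrary to your closing reformulation; they only delete matching edges. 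Second, your cell counts for $K_{3,3}$ are wrong: $K_{3,3}^*$ has $30$ vertices, $72$ edges and $36$ faces, Euler characteristic $-6$ (the genus-$4$ surface); the numbers $36$ and $18$ are the $1$- and $2$-cell counts of the quotient $\overline{K_{3,3}^*}$, which has $15$ vertices. Third, the local condition gives $\deg v = 2 + \#\bigl(N(v)\cap e\bigr)$ for every edge $e$ missing $v$, so degree-$2$ and mixed-degree configurations also arise and must be excluded, and the elimination of all graphs on more than six vertices---which you yourself flag as the delicate part---is left unproved; this is precisely where the paper's formulation~($*$) shortens matters (for instance, a degree-$4$ vertex forces every edge into its closed neighbourhood of five vertices).
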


As the proof of this result is quite short, we include it for the convenience of the reader.

\begin{proof} In order for the deleted product $G^*$ of a graph $G$ to be a closed surface, one requires that each edge (i.e., 1-cell) in $G^*$ be incident with exactly two faces. This occurs precisely when, for each edge $e$ in $G$ and each vertex $v\in G$ that is not incident with $e$, there are exactly two edges in $G$ that are incident with $v$ and are independent of $e$. Rephrasing this yet again we obtain the following necessary and sufficient condition: 
\begin{enumerate}
\item[($*$)] for each edge $e$ in $G$, the graph $G- e$ obtained by deleting  $e$ and the interior of all adjacent edges, is a union of disjoint cycles.
\end{enumerate}
Now $K_5$ and $K_{3,3}$  satisfy condition ($*$), and so their deleted products are closed surfaces. On the other hand, if a graph $G$ satisfies ($*$), it must have at least 5 vertices. It is easy to verify that if $G$ has 5 vertices and satisfies ($*$), then it is $K_5$, and  if $G$ has 6 vertices and satisfies ($*$), then it is $K_{3,3}$. One checks readily that no graph with more than 6 vertices can satisfy ($*$).
\end{proof}

Kleitman proved that for odd $m,n$, any two drawings of $K_{m,n}$ (or of $K_{n}$) have equal numbers of independent crossings, mod 2 \cite{Kl2}.  
The result was independently proved for good drawings by Harborth \cite{Harbo}, and another proof, again for good drawings, was given by McQuillan and Richter  \cite{MR}.
In particular, all drawings of $K_5$ and $K_{3,3}$ have an odd number of independent crossings. 
The following result, which is a rewording of Theorem \ref{K5K33con}, is a converse to Kleitman's Theorem:

\begin{theorem}\label{T:sub}  For $G$ equal to $K_5$ or $K_{3,3}$, every odd subset $A$ of $PG$ is 2-realisable.
\end{theorem}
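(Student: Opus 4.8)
The plan is to use Theorem~\ref{C1}, which reduces the claim to a statement about the symmetric cohomology group $H^2(\overline{G^*},\Z_2)$: a subset $A\subset PG$ is $2$-realisable if and only if the class it represents equals the van Kampen obstruction $\mathfrak o(G)$. So I must show that for $G=K_5$ or $K_{3,3}$, \emph{every} odd subset of $PG$ represents the class $\mathfrak o(G)$, and no even subset does. Since $G$ is nonplanar, the Wu--Tutte Theorem gives $\mathfrak o(G)\ne 0$. By Kleitman's Theorem, every drawing of $G$ has an odd number of independent crossings, so $\mathfrak o(G)$ is represented by some odd subset; hence it suffices to prove that all odd subsets represent the \emph{same} cohomology class (equivalently, that any two subsets of the same parity are cohomologous, while subsets of opposite parity are not). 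In cohomological terms: I need to show that the map ``cardinality mod $2$'' induces an isomorphism $H^2(\overline{G^*},\Z_2)\cong\Z_2$, i.e.\ that $H^2(\overline{G^*},\Z_2)$ has dimension exactly $1$ and the parity functional is well defined and nonzero on it.

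The key tool is Theorem~\ref{cs}: for $G=K_5$ or $K_{3,3}$, the deleted product $G^*$ is a closed surface, so $\overline{G^*}=G^*/\Z_2$ is a closed surface (the $\Z_2$-action being free), and $H^2$ of a closed (possibly non-orientable) surface with $\Z_2$ coefficients is $\Z_2$. This immediately gives $\dim H^2(\overline{G^*},\Z_2)=1$. It then remains only to check two things. First, that the number of $2$-cells of $\overline{G^*}$ is odd, so that the ``sum of all coefficients'' functional is a nonzero linear functional on the $2$-cochains that descends to cohomology; here $\overline{G^*}$ has $|PG|$ two-cells, and $|PK_5|=15$, $|PK_{3,3}|=18$ --- wait, $18$ is even, so this naive count must be done on $\overline{G^*}$ rather than a signed count, and I should instead argue directly that the coboundary of any $1$-cochain has an even number of nonzero coefficients. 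That is the second and real content: I must verify that for each generating symmetric $1$-cochain $ev+ve$, its differential --- the sum of $2$-cells $ee'$ with $e'$ incident to $v$ and independent of $e$ --- involves an even number of pairs. By condition $(*)$ in the proof of Theorem~\ref{cs}, deleting $e$ (and interiors of adjacent edges) leaves a union of cycles, so the edges incident to $v$ and independent of $e$ are exactly the two edges at $v$ in that cycle union; hence the differential of $ev+ve$ is a sum of exactly two $2$-cells, which is even. Therefore every coboundary has even parity, so ``parity of the coefficient sum'' is a well-defined functional on $H^2(\overline{G^*},\Z_2)$, and since this group is one-dimensional and contains the nonzero class $\mathfrak o(G)$ (represented by an odd subset), the parity functional is nonzero; thus the odd subsets are precisely those representing $\mathfrak o(G)$.

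Assembling these pieces: given an odd subset $A\subset PG$, the corresponding symmetric $2$-cocycle has odd coefficient parity, so it represents the unique nonzero class in $H^2(\overline{G^*},\Z_2)$, which by Wu--Tutte (nonplanarity) and Kleitman is exactly $\mathfrak o(G)$; by Theorem~\ref{C1}, $A$ is $2$-realisable. The main obstacle --- really the only nonroutine step --- is the parity bookkeeping for coboundaries: confirming that every $1$-cochain's differential flips an even number of $2$-cells, which is where condition $(*)$ / the closed-surface structure from Theorem~\ref{cs} is essential. Everything else (the surface cohomology computation, the invocation of Wu--Tutte and Kleitman) is standard, and the combinatorial check that the two edges at $v$ in the cycle decomposition of $G-e$ are indeed the two edges appearing in $d(ev+ve)$ can be made uniformly for both $K_5$ and $K_{3,3}$ from condition $(*)$ without case analysis.
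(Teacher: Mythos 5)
Your proposal is correct and follows essentially the same route as the paper: both arguments rest on Theorem~\ref{cs} giving a closed surface, hence $\dim H^2(\overline{G^*},\Z_2)=1$, on the fact that each 1-cell is incident with exactly two faces so that coboundaries have even parity, and on Theorem~\ref{C1} to convert the cohomological statement into 2-realisability. The only cosmetic differences are that you certify $\mathfrak o(G)\neq 0$ via the Wu--Tutte Theorem (and cite Kleitman for the parity of $\mathfrak o(G)$), whereas the paper deduces both directly from the one-crossing drawing together with a counting argument identifying the exact cocycles with the even-parity ones.
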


\begin{proof}
By Theorem~\ref{cs}, $\overline{G^*}$ is a closed surface. 
Consequently, the cohomology space $H^2(\overline{G^*},\Z_2)$ has dimension one.
Hence the exact 2-cocycles form a codimension one vector subspace of the space $Z$ of 2-cocycles in $\overline{G^*}$. So exactly half the 2-cocycles are exact, and half are not exact. Because $\overline{G^*}$ is a closed surface,  the differential of each 1-cell is the sum of two faces. It follows that all exact 2-cocycles are the sum of an even number of faces. Hence, because of their number, the set of exact 2-cocycles is precisely the set of 2-cocycles that are the sum of an even number of faces. Now consider the first drawings $f$ of $G$ in Figures~\ref{figure:k5} and \ref{figure:k33} respectively. It has only 1 independent crossing. So the corresponding cocycle $\overline\Phi_f(G)$ in $\overline{G^*}$ is a single face. It follows that $\mathfrak o(G)\not=0$. Thus for any drawing $g$ of $G$ in the plane, the corresponding cocycle $\overline\Phi_g(G)$  in $\overline{G^*}$ is the sum of an odd number of faces; i.e.,  the number of independent crossings is odd.  Conversely, by the van Kampen--Shapiro--Wu Theorem (see Theorem~\ref{C1}), every such cocycle is obtained from such a drawing.
\end{proof}

\section{Drawings of $K_6$}\label{higherKs}

According to our calculations, $\overline{K_6^*}$ has 45 faces and 60 edges, and the differential from the space of symmetric 1-cochains has rank 35. Using Theorem \ref{C1}, a 2-realisable crossing set of $K_6$ is given by a cochain of the form $\Phi_f(K_6)+\alpha$, for some drawing $f$ of $K_6$, where $\alpha$ is an element of the image of the differential. Choose $f$ to be the first drawing of Figure~\ref{F:k6;3-15}, and consider the $2^{35}$ possible elements $\alpha$. For each cochain $\Phi_f(K_6)+\alpha$, one just adds the numbers of 1's to obtain the ``cardinality''.  Performing this on a personal computer using Mathematica, we found that there is no 2-realisable crossing set for $K_6$ with cardinality in $\{0,1,2,41,42,43,44,45\}$. This establishes Theorem~\ref{K6con}.


We conclude this study with some further comments on the 2-realisable crossing set of $K_6$. First notice that there are tolerable drawings of $K_6$ with just 3 independent crossings, but not all subsets $A\subseteq P$ with 3 elements are 2-realisable. Indeed, in any drawing, every pair of independent 3-cycles must cross each other an even number of times, by the Jordan curve theorem. 
Since $\overline{K_6^*}$ has 45 faces and the differential has rank 35, the symmetric cohomology $H^2(\overline{K_6^*},\Z_2)$ has dimension 10. Notice also that there are 10 ways of separating the vertices of $K_6$ into two 3-element subsets. For each such partition, the two 3-element subsets give a copy of the disjoint union $C_3\sqcup C_3$ of two 3-cycles;  there are 9 potential crossing of the edges of one 3-cycle with the edges of the other, but as we just remarked, there must be an \emph{even} number of such crossings. This gives 10 conditions on $H^2(\overline{K_6^*},\Z_2)$ for $\mathfrak o(K_6)$. These conditions do not uniquely determine $\mathfrak o(K_6)$; for instance, the zero element satisfies them all, but  $\mathfrak o(K_6)\not=0$ as $K_6$ is not planar.
Consider the induced  $K_{3,3}$ subgraphs in $K_6$. For a 2-realisable set $A\subset PK_6$, the intersection $A\cap PK_{3,3}$ must have an odd number of elements, by Kleitman's theorem. There are 10 such induced $K_{3,3}$ subgraphs, so there are 10 conditions of this kind. However, it turns out that these conditions are not independent, so they also do not by themselves uniquely determine $\mathfrak o(K_6)$.  
Similarly, for the induced  $K_5$ subgraphs  in $K_6$,  for a 2-realisable set $A\subset PK_6$, the intersection $A\cap PK_5$ must have an odd number of elements. This gives 6 conditions on $H^2(\overline{K_6^*},\Z_2)$ for $\mathfrak o(K_6)$. Together, the above conditions are sufficient. One has:

\begin{theorem}\label{equivs}
 The subset $A$ of $PK_6$ is 2-realisable if and only if the following three conditions are satisfied:
\begin{enumerate}
\item for each induced $K_5$ subgraph of $K_6$, the intersection $A\cap PK_{5}$ has an odd number of elements,
\item for each induced $K_{3,3}$ subgraph of $K_6$, the intersection $A\cap PK_{3,3}$ has an odd number of elements,
\item for each induced $C_3\sqcup C_3$ subgraph of $K_6$, the intersection $A\cap P(C_3\sqcup C_3)$ has an even number of elements.
\end{enumerate}
\end{theorem}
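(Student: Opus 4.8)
The plan is to work entirely inside the symmetric cohomology group $V:=H^2(\overline{K_6^*},\Z_2)$, which has dimension $10$, and to exhibit the three families of conditions as linear functionals on $V$ whose common zero set, translated by $\mathfrak o(K_6)$, is exactly the set of $2$-realisable subsets. By Theorem~\ref{C1}, a subset $A\subseteq PK_6$ is $2$-realisable precisely when its image $\overline A$ in $V$ equals $\mathfrak o(K_6)$; equivalently, the class $\overline A-\mathfrak o(K_6)$ vanishes. So the whole statement reduces to showing: the ten ``$C_3\sqcup C_3$'' parity conditions, the ten ``$K_{3,3}$'' parity conditions, and the six ``$K_5$'' parity conditions together cut out, inside $V$, a set that is a single coset of $\{0\}$ containing $\mathfrak o(K_6)$ --- i.e.\ the affine functionals ``$|A\cap PH|$ is odd (resp.\ even)'' all take the correct value on $\mathfrak o(K_6)$ and their linear parts span the full dual space $V^*$.

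First I would verify that each condition is genuinely necessary. Necessity of (iii) is the Jordan curve theorem: two disjoint $3$-cycles drawn in the plane cross an even number of times, hence the parity of $A\cap P(C_3\sqcup C_3)$ is a drawing-independent invariant equal to its value on any drawing, which is $0$. Necessity of (i) and (ii) is Kleitman's theorem applied to the induced $K_5$ and $K_{3,3}$ subgraphs: restricting a drawing of $K_6$ to such a subgraph gives a drawing of $K_5$ (resp.\ $K_{3,3}$), whose number of independent crossings is odd, and this number has the same parity as $|A\cap PK_5|$ (resp.\ $|A\cap PK_{3,3}|$). Each of these is an affine-linear condition on $\overline A\in V$ because ``restrict to the faces of $\overline{H^*}$ and sum'' is linear in the cochain, and its constant term is pinned down by evaluating on, say, the first drawing of Figure~\ref{F:k6;3-15}. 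So all $26$ conditions hold for every $2$-realisable $A$; in particular they hold for $\mathfrak o(K_6)$.

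For sufficiency I would show the linear parts of these $26$ functionals span $V^*$. Since $\dim V=10$, it suffices to produce $10$ among them that are independent; the natural candidates are the six $K_5$-restriction functionals together with four of the ten $C_3\sqcup C_3$-restriction functionals (the remark in Section~\ref{higherKs} already notes that the ten $K_{3,3}$ conditions are dependent, and that the ten $C_3\sqcup C_3$ conditions alone are not sufficient, so some mixing is forced). Concretely I would set up the $45$ faces of $\overline{K_6^*}$ and the $60$ edges, record the rank-$35$ differential, pick a basis of the $10$-dimensional cokernel, and express each restriction functional as a $\{0,1\}$-vector on faces reduced modulo the image of the differential; then a rank computation over $\Z_2$ (the same Mathematica computation already used to prove Theorem~\ref{K6con}) shows the $26\times 10$ matrix has rank $10$. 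Once the linear parts span $V^*$ and all $26$ affine conditions are satisfied by the single class $\mathfrak o(K_6)$, their common solution set is exactly $\{\mathfrak o(K_6)\}$, and combining with Theorem~\ref{C1} gives the equivalence. The main obstacle is the linear-algebra bookkeeping: correctly identifying which functionals are redundant and confirming the combined rank is $10$ rather than $9$ --- this is where a careful (machine-assisted) computation over $\Z_2$ is indispensable, and it is the step most prone to an off-by-one or an indexing error in the face/edge incidence data of $\overline{K_6^*}$.
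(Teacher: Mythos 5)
Your proposal is correct and follows essentially the same route as the paper: necessity of (a)--(c) from Kleitman's theorem and the Jordan curve theorem, and sufficiency reduced to a machine-checked linear-algebra fact in the $10$-dimensional symmetric cohomology $H^2(\overline{K_6^*},\Z_2)$, which is exactly the computer verification the paper invokes. Your formulation of that computation as a rank check that the $26$ affine restriction functionals have linear parts spanning the dual space is just a more explicit description of the same verification.
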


The forward direction of this result is clear from the above discussion, and remains true if we replace $K_6$ by an arbitrary graph (in which case, one may replace in condition (c) the disjoint union of two 3-cycles by a disjoint union of any two cycles). We established the sufficiency of the conditions directly by computer computation. The result is closely related to van der Holst's theorem \cite[Theorem~4]{vdH}, which holds for arbitrary graphs $G$ and which says that the symmetric deleted product cohomology $H^2(\overline{G^*},\Z_2)$ is generated 
by subdivisions of  $K_5$'s and $K_{3,3}$'s, and by  2-tori resulting from pairs of disjoint cycles. Note that $K_6$ has subgraphs which are nontrivial subdivisions of $K_5$ but it was not necessary to consider these in Theorem~\ref{equivs}.

As we will now explain,  it is also possible to establish Theorem~\ref{K6con} without reliance on computer computations, or the use of the deleted product cohomology, by using the forward direction of Theorem~\ref{equivs} (which didn't require computer computation). Suppose that  $A$ is a 2-realisable crossing set  for $K_6$. One needs to show that $3\le \# A\le 40$. If  $A$ has only 0, 1 or 2 elements then it is easy to find an induced $K_{3,3}$ subgraph for which $A\cap PK_{3,3}$ is empty, contradicting condition (b) of Theorem~\ref{equivs}. So it remains to show that $\# A \le 40$. Indeed, one had the following general result.

\begin{lemma}
If $n\geq 6$ and $A$ is a 2-realisable crossing set  for $K_n$, then $\# A \le \lfloor \frac83 \binom{n}{4}\rfloor$.\end{lemma}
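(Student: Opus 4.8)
The plan is to extract everything from the forward direction of Theorem~\ref{equivs}, in its extension to arbitrary graphs, using only condition (c): if $A\subseteq PG$ is $2$-realisable and $H$ is a subgraph of $G$ isomorphic to a disjoint union of two cycles, then $A\cap PH$ has even cardinality. This part needs no computer computation, being a consequence of the Jordan curve theorem. For $G=K_n$ I would let $H$ range over all copies of $C_3\sqcup C_3$, i.e.\ all unordered pairs of vertex-disjoint triangles spanning $6$ of the $n$ vertices. For such an $H$ the set $PH$ consists exactly of the $9$ pairs $\{e,f\}$ with $e$ an edge of one triangle and $f$ an edge of the other (edges of the same triangle share a vertex, edges of different triangles are automatically independent). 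Since $9$ is odd, condition (c) forces $\#(A\cap PH)\le 8$, so $A$ must omit at least one element of each such $PH$: writing $B:=PK_n\setminus A$, every copy of $C_3\sqcup C_3$ contains a pair belonging to $B$.

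I would then double-count the incidence set
\[
  \mathcal I=\{(H,\{e,f\}):\ H\cong C_3\sqcup C_3 \text{ a subgraph of }K_n,\ \{e,f\}\in PH\cap B\}.
\]
By the previous paragraph, $\#\mathcal I\ge N$, where $N$ is the number of copies of $C_3\sqcup C_3$ in $K_n$, namely $N=10\binom{n}{6}$ (choose the $6$ vertices, then split them into two triples in $\tfrac12\binom{6}{3}=10$ ways). On the other hand, a fixed pair $\{e,f\}\in B$ with $e=\{a,b\}$, $f=\{c,d\}$ and $a,b,c,d$ distinct lies in $PH$ precisely for $H=\{a,b,x\}\sqcup\{c,d,y\}$ with $x,y$ distinct vertices outside $\{a,b,c,d\}$, and distinct ordered pairs $(x,y)$ give distinct subgraphs $H$; hence $\{e,f\}$ is a cross-pair of exactly $(n-4)(n-5)$ copies, so $\#\mathcal I=\#B\cdot(n-4)(n-5)$.

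Combining the two counts gives
\[
  \#B\;\ge\;\frac{10\binom{n}{6}}{(n-4)(n-5)}\;=\;\frac{n(n-1)(n-2)(n-3)}{72}\;=\;\frac{1}{3}\binom{n}{4},
\]
and since $\#B$ is an integer, $\#B\ge\lceil\frac13\binom{n}{4}\rceil$. Using $\#PK_n=3\binom{n}{4}$ (each $4$-subset of vertices contributes its three perfect matchings), I conclude
\[
  \#A=3\binom{n}{4}-\#B\;\le\;3\binom{n}{4}-\Big\lceil\tfrac13\binom{n}{4}\Big\rceil\;=\;\Big\lfloor\tfrac{8}{3}\binom{n}{4}\Big\rfloor,
\]
where the last equality is the elementary identity $3m-\lceil m/3\rceil=\lfloor 8m/3\rfloor$ valid for all integers $m$.

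The proof is essentially a single averaging step, so I do not expect a genuine obstacle. The only points requiring care are the two bookkeeping facts used above: that $PH$ has exactly $9$ elements (so that the oddness of $9$ forces the loss of at least one pair per copy of $C_3\sqcup C_3$), and that the per-pair multiplicity equals $(n-4)(n-5)$; the hypothesis $n\ge 6$ is precisely what guarantees that copies of $C_3\sqcup C_3$ exist and that these counts are nonzero, making the whole argument available. As a sanity check, for $n=6$ this yields $\#B\ge 5$ and hence $\#A\le 40$, recovering Theorem~\ref{K6con}.
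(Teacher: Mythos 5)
Your proof is correct and is essentially the same as the paper's: both arguments use the evenness condition for pairs of disjoint triangles (condition (c) of Theorem~\ref{equivs}) and double-count copies of $C_3\sqcup C_3$ against omitted pairs, with the same multiplicity $(n-4)(n-5)$, yielding $\#(PK_n\setminus A)\ge\frac13\binom{n}{4}$. Your explicit integrality step justifying the floor is a minor refinement of what the paper leaves implicit.
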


\begin{proof}
By condition (c) of Theorem~\ref{equivs}, for a plane drawing of a complete graph $K_n$, the number of crossings of any two \emph{independent 3-cycles} (i.e., 3-cycles which have no common vertices) must be even. Therefore if a subset $A \subset PK_n$ is $2$-realisable, then from the nine pairs of independent edges defined by a pair of independent 3-cycles, at least one pair (and in general, an odd number of pairs) does not belong to $A$. For $n \ge 6$ one has $\frac12 \binom{n}{3} \binom{n-3}{3}$ pairs of independent 3-cycles. A pair of independent edges belongs to $(n-4)(n-5)$ pairs of independent 3-cycles, and so the complement $A'$ to $A$ in $PK_n$ must be of cardinality at least 
\[
\frac{\frac12 \binom{n}{3} \binom{n-3}{3}}{(n-4)(n-5)}=\frac13 \binom{n}{4}.
\]
Thus, since $\# PK_n = 3 \binom{n}{4}$, we have  $\# A \le  3 \binom{n}{4} -\frac13 \binom{n}{4} =\frac83 \binom{n}{4}$.
\end{proof}
In the case $n=6$, the above lemma gives $\# A \le 40$, as required.





\bibliographystyle{amsplain}
\bibliography{baddraw}

\end{document}